\DeclareMathAlphabet{\pcal}{OMS}{zplm}{m}{n}
\newtheorem{remark}{Remark}
\newcommand{\gf}{f^{\diamond}}
\newcommand{\gig}{g^\diamond}
\newcommand{\gh}{h^\diamond}
\newcommand{\R}{{\mathbb{R}}}
\newcommand{\C}{{\mathbb{C}}}
\newcommand{\Rmn}{{\R^{m\times n}}}
\newcommand{\Cmn}{{\C^{m\times n}}}
\newcommand{\Cnn}{\C^{n\times n}}
\newcommand{\wt}{\widetilde}
\newcommand{\tref}[1]{Table~\ref{#1}}
\title{COMPUTATION OF GENERALIZED MATRIX FUNCTIONS}
\author{Francesca Arrigo,\footnotemark[2]\,  Michele Benzi,\footnotemark[3]\,  
 and Caterina Fenu\footnotemark[4]}
\begin{document}
\maketitle

\renewcommand{\thefootnote}{\fnsymbol{footnote}}
\footnotetext[2]{Department of Science and High Technology, 
University of Insubria, Como 22100, Italy (\email 
francesca.arrigo@uninsubria.it).}
\footnotetext[3]{Department of Mathematics and Computer science, 
Emory University, Atlanta, Georgia 30322, USA (\email benzi@mathcs.emory.edu).
The work of this author was supported by National Science Foundation grant
DMS-1418889.} 
\footnotetext[4]{Department of Computer Science, 
University of Pisa, Pisa 56127, Italy (\email caterina.fenu@for.unipi.it). 
The work of this author was supported by INdAM-GNCS Grant 2014/001194.}
\renewcommand{\thefootnote}{\arabic{footnote}}

\begin{abstract}
We develop numerical algorithms for the efficient evaluation of quantities associated
with generalized matrix functions [J.~B.~Hawkins and A.~Ben-Israel, {\em Linear and
Multilinear Algebra}, 1(2), 1973, pp.~163--171]. Our algorithms are based on 
Gaussian quadrature and Golub--Kahan bidiagonalization. Block variants are also
investigated. Numerical experiments are performed to illustrate the effectiveness and
efficiency of our techniques in computing generalized matrix functions arising
in the analysis of networks.
\end{abstract}

\begin{keywords}
generalized matrix functions, Gauss quadrature, 
Golub--Kahan bidiagonalization, network communicability
\end{keywords}

\begin{AMS}
65F60, 15A16, 05C50
\end{AMS}

\pagestyle{myheadings}
\thispagestyle{plain}
\markboth{{\sc F.~Arrigo, M.~Benzi, and C.~Fenu}}{Generalized matrix functions}

\section{Introduction}

Generalized matrix functions were first introduced by Hawkins and Ben-Israel 
in \cite{HBI73} in order to extend the notion of a matrix function to
rectangular matrices. Essentially, the definition is based on replacing
the spectral decomposition of $A$ (or the Jordan canonical form, if $A$ is not
diagonalizable) with the singular value decomposition, and evaluating the
function at the singular values of $A$, if defined. 
While it is likely that this definition was inspired
by the analogy between the inverse and the Moore--Penrose generalized inverse,
which is well established in numerical linear algebra,
\cite{HBI73} is a purely theoretical paper and does not mention any potential 
applications or computational aspects. The paper appears to have gone 
largely unnoticed, despite 
increasing interest in matrix functions in the numerical linear algebra
community over the past several years; for instance, it is not cited 
in the important monograph by Higham 
\cite{BookH}. While it is likely that the perceived scarcity of applications
is to blame (at least in part) for this lack of attention, it
turns out that generalized matrix functions do have interesting
applications and have actually occurred
in the literature without being recognized as such; see section
\ref{sec:app} for some examples. 

In this paper we revisit the topic of generalized matrix functions, with
an emphasis on numerical aspects. After reviewing the necessary background
and definitions, we consider a few  situations naturally leading to 
generalized matrix functions. Moving on to numerical considerations, we develop several 
computational approaches based on variants of Golub--Kahan bidiagonalization 
to compute or estimate bilinear forms involving generalized matrix functions,
including entries of the generalized matrix function itself and the action
of a generalized matrix function on a vector. We further consider block variants
of Golub--Kahan bidiagonalization which can be used to evaluate matrix-valued 
expressions involving generalized matrix functions.   
Numerical experiments are used to illustrate the performance
of the proposed techniques on problems arising in the analysis of
directed networks. 

\section{Background}\label{sec:background}
In this section we review a few basic concepts from linear algebra that 
will be used throughout the paper, mostly to set our notation,
and recall the notion of generalized
matrix function.  

Let $A\in\Cmn$ and let $r$ be the rank of $A$. 
We can factor the matrix $A$ as $A = U\Sigma V^*$ using a singular value decomposition 
(SVD). 
The matrix $\Sigma\in\Rmn$ is 
diagonal and its entries $\Sigma_{ii} = \sigma_i$ are ordered as  
$\sigma_{1}\geq\sigma_{2}\geq\cdots\geq\sigma_{r}>\sigma_{r+1}=\cdots=\sigma_{q}=0$, 
where $q=\min\{m,n\}$. The $r$ positive $\sigma_i$ are the singular values of $A$. 
The matrices  
$U = [{\bf u}_1,{\bf u}_2,\ldots,{\bf u}_m]\in\C^{m\times m}$ and 
$V = [{\bf v}_1,{\bf v}_2,\ldots,{\bf v}_n]\in\C^{n\times n}$ are unitary and contain the left 
and right singular vectors of 
$A$, respectively. 
It is well known that the matrix $\Sigma$ is uniquely determined, while $U$ and $V$ are not. 
If $A$ is real, then $U$ and $V$ can be chosen to be real.
From the singular value decomposition of a matrix $A$ it follows that 
$AA^* = U\Sigma\Sigma^T U^*$ and $A^*A = V \Sigma^T\Sigma V^*$. 
Thus, the singular values of a matrix $A$ are the square roots of the 
positive eigenvalues of the matrix $AA^*$ or $A^*A$. 
Moreover, the left singular vectors of $A$ are the eigenvectors of the matrix 
$AA^*$, while the right singular vectors are the 
eigenvectors of the matrix $A^*A$. 
The singular values of a matrix also arise (together with their opposites) as the 
eigenvalues of the Hermitian matrix 
\begin{equation}\label{eq:calA}
\mathscr{A} =\left(
\begin{array}{cc}
0 & A\\
A^* & 0
\end{array}\right).
\end{equation}   
This can be easily seen for the case $m=n$; indeed, under this hypothesis, the 
spectral factorization of $\mathscr{A}$ is given by~\cite{BookHJ}:
\begin{equation*}
\mathscr{A} = \frac{1}{2}
\left(
\begin{array}{cc}
U & -U \\
V & V
\end{array}
\right)
\left(
\begin{array}{cc}
\Sigma & 0 \\
0 & -\Sigma
\end{array}
\right)
\left(
\begin{array}{cc}
U & -U \\
V & V
\end{array}
\right)^* .
\end{equation*}

Consider now the matrices $U_r\in\C^{m\times r}$ and $V_r\in\C^{n\times r}$ which contain the 
first $r$ columns of the matrices $U$ and $V$, respectively, and let 
$\Sigma_r\in\R^{r\times r}$ be the $r\times r$ leading block of $\Sigma$. 
Then a {\it compact SVD} ({\it CSVD}) of the matrix $A$ is 
$$A = U_r\Sigma_r V^*_r = \sum_{i=1}^r\sigma_i{\bf u}_i{\bf v}_i^*.$$

\subsection{Matrix functions}
There are several equivalent ways to define $f(A)$ when $A\in\Cnn$ is a square matrix. 
We recall here the definition based on the Jordan canonical form. For a 
comprehensive study of matrix functions, we refer to~\cite{BookH}. 
 
Let $\{\lambda_1,\lambda_2,\ldots,\lambda_s\}$ be the set of distinct eigenvalues of $A$ and let 
$n_i$ denote the {\em index} of the $i$th eigenvalue, i.e., the size of the largest Jordan
block associated with $\lambda_i$. 
Recall that a function $f$ is said to be {\it defined on the spectrum of $A$} 
if the values $f^{(j)}(\lambda_i)$ exist for all 
$j=0,\ldots,n_i-1$ and for all $i=1,\ldots,s$, where $f^{(j)}$ is the $j$th derivative 
of the function and $f^{(0)}=f$.

\begin{definition}{\rm \cite[Definition 1.2]{BookH}}\label{def:fun}
Let $f$ be defined on the spectrum of $A\in\C^{n\times n}$ and let $Z^{-1} A Z = J = \diag(J_1,J_2,\ldots,J_p)$ be the 
Jordan canonical form of the matrix, where 
$$J_k = J_k(\lambda_k) =
\left(
\begin{array}{cccc}
\lambda_k &    1      &        & \\
          & \lambda_k & \ddots & \\
          &           & \ddots & 1 \\
          &           &        & \lambda_k  
\end{array}
\right)\in\C^{m_k\times m_k},$$ 
$\sum_{k=1}^pm_k=n$, and $Z$ is nonsingular. 
Then 
$$f(A) := Zf(J)Z^{-1} = Z \diag(f(J_1),f(J_2),\ldots,f(J_p)) Z^{-1},$$
where 
$$f(J_k) :=
\left(
\begin{array}{cccc}
f(\lambda_k) & f'(\lambda_k)  & \cdots & \frac{f^{(m_k-1)}(\lambda_k)}{(m_k-1)!}\\
             & f(\lambda_k)        & \ddots & \vdots \\
             &                     & \ddots & f'(\lambda_k) \\
             &                     &        & f(\lambda_k)   
\end{array}
\right),$$
 
\end{definition}

If the matrix $A$ is diagonalizable, then the Jordan canonical form reduces to the spectral 
decomposition: $A = ZDZ^{-1}$ 
with $D=\diag(\lambda_1,\lambda_2,\ldots,\lambda_n)$. 
In such case, $f(A) = Z\diag(f(\lambda_1),f(\lambda_2),\ldots,f(\lambda_n))Z^{-1}$. 

If the function $f$ has a Taylor series expansion, we can use it to describe the associated matrix function, provided that the eigenvalues of the matrix $A$ satisfy certain requirements.
\begin{theorem}{\rm \cite[Theorem 4.7]{BookH}}
Let $A\in\Cnn$ and suppose that $f$ can be expressed as 
$$f(z) = \sum_{k=0}^\infty a_k(z-z_0)^k$$
with radius of convergence $R$. 
Then $f(A)$ is defined and is given by $$f(A) = \sum_{k=0}^\infty a_k (A-z_0I)^k$$
if and only if each of the distinct eigenvalues of $A$ 
$\{\lambda_1,\lambda_2,\ldots,\lambda_s\}$ satisfies one of the following:
\begin{itemize}
\item[(i)] $|\lambda_i - z_0|< R$; 
\item[(ii)] $|\lambda_i - z_0| = R$ and the series for $f^{(n_i - 1)}(\lambda)$ is 
convergent at the point $\lambda = \lambda_i$, $i=1,2,\ldots, s$. 
\end{itemize}
\end{theorem}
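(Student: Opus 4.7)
The plan is to reduce to a single Jordan block. Since the Jordan decomposition $Z^{-1} A Z = J = \mathrm{diag}(J_1,\ldots,J_p)$ intertwines with polynomial evaluation, the partial sums $S_K := \sum_{k=0}^K a_k (A - z_0 I)^k$ satisfy $Z^{-1} S_K Z = \mathrm{diag}(T_K^{(1)},\ldots,T_K^{(p)})$, where $T_K^{(l)} = \sum_{k=0}^K a_k (J_l - z_0 I)^k$. Hence $S_K$ converges if and only if $T_K^{(l)}$ converges for every $l$, and it suffices to analyze a single block $J = \lambda I_m + N$ with $N$ the nilpotent superdiagonal shift, $N^m = 0$.

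Next I would expand via the binomial theorem (valid because $I$ and $N$ commute),
$$(J - z_0 I)^k = \sum_{j=0}^{\min(k,\,m-1)} \binom{k}{j} (\lambda - z_0)^{k-j} N^j,$$
and reorder the summation to obtain, for $K \ge m-1$,
$$\sum_{k=0}^K a_k (J - z_0 I)^k = \sum_{j=0}^{m-1} \frac{N^j}{j!} \sum_{k=j}^K a_k \frac{k!}{(k-j)!} (\lambda - z_0)^{k-j}.$$
Because $I, N, \ldots, N^{m-1}$ are linearly independent, convergence of the block partial sums is equivalent to convergence of each of the $m$ inner scalar series; and the $j$-th of these is precisely the partial sum of the formally differentiated power series for $f^{(j)}(\lambda)$.

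At this point I would invoke standard power-series theory: each formal derivative series has the same radius of convergence $R$ as the original. If $|\lambda - z_0| < R$ every derivative series converges absolutely, yielding condition (i); whereas if $|\lambda - z_0| > R$ already the zeroth series diverges, so $f(A)$ is undefined. Running this analysis over all Jordan blocks carrying a given distinct eigenvalue $\lambda_i$, the maximum block size encountered is exactly the index $n_i$, which gives the two alternatives in the statement.

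The main obstacle is the boundary case $|\lambda_i - z_0| = R$: one must show that the single hypothesis (ii)---convergence of the $(n_i-1)$-th formal derivative series at $\lambda_i$---already forces convergence of every lower-order derivative series there. I would handle this via Abel's test, writing
$$a_k \frac{k!}{(k-j)!}(\lambda_i - z_0)^{k-j} = \left[ a_k \frac{k!}{(k-(n_i-1))!}(\lambda_i - z_0)^{k-(n_i-1)} \right] \cdot (\lambda_i - z_0)^{(n_i-1)-j} \cdot \frac{(k-(n_i-1))!}{(k-j)!},$$
and observing that the bracketed factor generates a convergent series by hypothesis, the middle factor is a fixed scalar, and the last factor is real, positive, bounded, and monotonically decreasing to zero (of order $k^{-((n_i-1)-j)}$). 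Abel's test then delivers convergence of each lower-order series, and combining this with the block reduction yields the characterization in both directions.
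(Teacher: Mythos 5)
The paper does not prove this statement at all: it is quoted verbatim from Higham's book (Theorem 4.7 there) as background, so there is no internal proof to compare against. Judged on its own, your argument is correct and is essentially the standard textbook proof: block-diagonal reduction through the Jordan form, the binomial expansion of $(J-z_0I)^k$ for a single block $\lambda I+N$, reordering so that the coefficient of $N^j/j!$ is the partial sum of the $j$-times differentiated power series, and the observation that linear independence of $I,N,\ldots,N^{m-1}$ makes convergence of the matrix series equivalent to convergence of these scalar series for $j=0,\ldots,m-1$ (with $m$ running up to the index $n_i$). Your Abel-test factorization for the boundary case is the right key lemma and is sound: the bracketed terms form a convergent series by hypothesis (ii), and $(k-(n_i-1))!/(k-j)!$ is positive, bounded, and monotonically decreasing, so Abel's (Dirichlet-type) test gives convergence of every lower-order differentiated series at $\lambda_i$; together with divergence of the $j=0$ series when $|\lambda_i-z_0|>R$, this yields both implications. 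The only point you pass over lightly is the identification, needed for ``$f(A)$ is defined and is given by the series,'' that the sum of the $j$-th differentiated series at a boundary eigenvalue is what is meant by $f^{(j)}(\lambda_i)$ in Definition~\ref{def:fun}; inside the disk this is classical term-by-term differentiation, and on the boundary it is a matter of the convention adopted in Higham's statement, so it does not affect the substance of your argument.
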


\subsection{Generalized matrix functions}
In \cite{HBI73} the authors considered the problem of defining functions 
of rectangular matrices. 
Their definition relies on the following generalization of the SVD.
\begin{theorem}\label{thm:ci}
	Let $A\in\C^{m\times n}$ be a matrix of rank $r$ and let 
	$\{c_i: i=1,2,\ldots, r\}$ be any complex numbers satisfying 
	$$|c_i|^2 = \sigma_i^2 = \lambda_i(AA^*),$$
	where $\lambda_1(AA^*)\geq\lambda_2(AA^*)\geq\cdots\geq\lambda_r(AA^*)>0$ are the positive eigenvalues of $AA^*$. 
	Then there exist two unitary matrices ${\pcal X}\in\C^{m\times m}$ 
	and ${\pcal Y}\in\C^{n\times n}$ such 
	that ${\pcal D} = {\pcal X}^*A{\pcal Y} \in\Cmn$ has entries: 
	$$d_{ij}=\left\{
	\begin{array}{ll}
	c_i & {\rm if  }\ 1 \leq i=j\leq r\,, \\
	0   & {\rm otherwise.}
	\end{array}
	\right .$$ 
\end{theorem}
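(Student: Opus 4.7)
The plan is to reduce this to the ordinary SVD by absorbing the phase factors that distinguish the $c_i$ from the $\sigma_i$ into one of the unitary factors. Since $\sigma_i>0$ and $|c_i|=\sigma_i$ for $i=1,\dots,r$, each $c_i$ admits a polar representation $c_i=\sigma_i e^{\mathbf{i}\theta_i}$ with $\theta_i\in\mathbb{R}$ (and $\theta_i$ uniquely determined modulo $2\pi$ when $\sigma_i>0$).

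Start from a standard SVD $A=U\Sigma V^*$ guaranteed by the usual theorem, with $U\in\mathbb{C}^{m\times m}$, $V\in\mathbb{C}^{n\times n}$ unitary and $\Sigma\in\mathbb{R}^{m\times n}$ containing $\sigma_1,\dots,\sigma_r$ on its leading diagonal. Define the diagonal matrix $\Phi\in\mathbb{C}^{n\times n}$ by
\[
\Phi=\operatorname{diag}\!\bigl(e^{\mathbf{i}\theta_1},\dots,e^{\mathbf{i}\theta_r},1,\dots,1\bigr).
\]
Because $|e^{\mathbf{i}\theta_i}|=1$, $\Phi$ is unitary, and since the product of unitary matrices is unitary, $\mathcal{Y}:=V\Phi\in\mathbb{C}^{n\times n}$ is unitary as well. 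Take $\mathcal{X}:=U$.

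Next I would verify the factorization by direct computation: $\mathcal{X}^*A\mathcal{Y}=U^*A V\Phi=\Sigma\Phi$. Because $\Sigma$ is diagonally supported and $\Phi$ is diagonal, the $(i,j)$ entry of $\Sigma\Phi$ equals $\sigma_i\,\delta_{ij}\,\Phi_{jj}$, which reduces to $c_i$ when $1\le i=j\le r$ and to $0$ otherwise. Hence $\mathcal{D}:=\mathcal{X}^*A\mathcal{Y}$ has precisely the entries described in the statement.

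There really is no technical obstacle here: the result is a one-line rescaling of the SVD, and the only thing to check carefully is that the phase matrix is applied on the correct side so that $\mathcal{D}$ carries $c_i$ rather than $\overline{c_i}$ on its diagonal, and that the padded identity block of $\Phi$ keeps $\mathcal{Y}$ unitary without altering the trailing zero columns of $\Sigma$. One could equally well split the phase symmetrically between $\mathcal{X}$ and $\mathcal{Y}$ (writing $c_i=e^{\mathbf{i}\theta_i/2}\sigma_i e^{\mathbf{i}\theta_i/2}$), which would make the construction look more symmetric but offers no real advantage.
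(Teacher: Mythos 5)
Your proof is correct: writing $c_i=\sigma_i e^{\mathbf{i}\theta_i}$ and absorbing the unimodular phases into the right singular-vector factor via the unitary diagonal matrix $\Phi$ gives $\mathcal{X}^*A\mathcal{Y}=\Sigma\Phi$, whose entries are exactly the prescribed $d_{ij}$. The paper itself states this result without proof (it is quoted from Hawkins and Ben-Israel), and your phase-absorption reduction to the ordinary SVD is precisely the standard argument one would give, so there is nothing to add.
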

From this theorem it follows that, once the non-zero entries of 
$\pcal{D}$ are fixed,  $A$ can be written as 
\begin{equation}\label{eq:factorization}
A={\pcal X}{\pcal D}{\pcal Y}^*={\pcal X}_r{\pcal D}_r{\pcal Y}_r^*,
\end{equation} 
where 
${\pcal D}_r$ is the leading $r\times r$ block of ${\pcal D}$ and the matrices ${\pcal X}_r\in\C^{m\times r}$ 
and ${\pcal Y}_r\in\C^{n\times r}$ consist of the first $r$ columns of the matrices 
${\pcal X}$ and ${\pcal Y}$, respectively.  

In this paper we do not make use of the extra degrees of freedom provided 
from this (slightly) more general SVD of $A$, and we assume that 
$c_i=\sigma_i$ for all $i=1,2,\ldots,r$. This assumption 
ensures that the decompositions in \eqref{eq:factorization} 
coincide with the SVD and CSVD of the matrix $A$, respectively. 
In particular, ${\pcal D} = \Sigma$, ${\pcal X} = U$, and ${\pcal Y} = V$.
All the definitions and results presented in the remaining of this section 
and in the next one can be 
extended to the case when the coefficients $\{c_i: i=1,2,\ldots,r\}$ 
do not necessarily coincide with the singular values, but satisfy the 
hypothesis of Theorem~\ref{thm:ci}. 

\begin{definition}\label{def:gf}
Let $A\in\Cmn$ be a rank $r$ matrix and let $A = U_r\Sigma_rV_r^*$ be its CSVD.
Let $f:\R\rightarrow\R$ be a scalar function such that $f(\sigma_i)$ is 
defined for all $i=1,2,\ldots,r$. 
The {\rm generalized matrix function} $\gf:\Cmn\rightarrow \Cmn$ 
{\rm induced by} $f$ is defined as 
$$\gf(A) := U_rf(\Sigma_r)V_r^*,$$
where $f(\Sigma_r)$ is defined for the 
square matrix $\Sigma_r$ according to definition~\ref{def:fun} as
$$f(\Sigma_r)=\diag(f(\sigma_1),f(\sigma_2),\ldots,f(\sigma_r)).$$
\end{definition}

As already mentioned in the Introduction,
generalized matrix functions arise, for instance,
when computing $f(\mathscr{A})$, where $\mathscr{A}$ is the 
matrix defined in~\eqref{eq:calA}. 
Indeed, if one uses the description of matrix function in terms of power series 
$f(z) = \sum_{k=0}^\infty a_k z^k$, it is easy to check that, within the radius of convergence:
$$f(\mathscr{A}) =\left( 
\begin{array}{cc}
f_{even}(\sqrt{AA^*}) & \gf_{odd}(A) \\
\gf_{odd}(A^*)   & f_{even}(\sqrt{A^*A}) 
\end{array}
\right),$$
where $$f(z) = f_{even}(z) + f_{odd}(z) = \sum_{k=0}^\infty a_{2k}z^{2k} 
+ \sum_{k=0}^\infty a_{2k+1}z^{2k+1}.$$

\begin{remark}
{\rm 
If $f(0)=0$ and the matrix $A\in\Cnn$ is Hermitian positive semidefinite,  
then  the generalized matrix 
function $\gf(A)$ reduces to the standard matrix function 
$f(A)$. If the more general decomposition of Theorem \ref{thm:ci}
is used instead, then the generalized matrix function reduces to $f(A)$ if $f(0)=0$ and the 
matrix $A\in\Cnn$ is normal, as long as $f$ is defined on the $c_i$; see \cite{HBI73}.
}
\end{remark}

\begin{remark}
{\rm
The Moore--Penrose pseudo-inverse of a matrix $A\in\Cmn$, denoted by $A^\dagger$, 
can be expressed as $\gf(A^*)$, where $f(z) = z^{-1}$.
Equivalently, $\gf(A) = (A^\dagger)^*$ when $f(z) = z^{-1}$.
Hence, there is a slight disconnect between the definition of generalized matrix function
and that of generalized inverse. This small issue could be addressed
by defining a generalized matrix function corresponding to the scalar function $f$ as
$\gf(A) = V_r f(\Sigma_r) U_r^*$, so that the generalized matrix function of an $m\times n$
matrix is an $n\times m$ matrix, and  $\gf(A) = A^\dagger$ when $f(z) = z^{-1}$. 
However, doing so would lead to the undesirable
property that $\gf(A) = A^*$ for $f(z) = z$, as well as other problems.
}
\end{remark}

\section{Properties}

In this section we review some properties of generalized matrix functions 
and we summarize a few new results. 

Letting $E_i={\bf u}_i{\bf v}_i^*$ and $E=\sum_{i=1}^rE_i$, 
we can write 
\begin{equation*}
A= \sum_{i=1}^r \sigma_i{\bf u}_i{\bf v}_i^* = \sum_{i=1}^r \sigma_iE_i,
\end{equation*}
and thus it follows that 
\begin{equation}\label{eq:AEi}
\gf(A) = \sum_{i=1}^rf(\sigma_i){\bf u}_i{\bf v}_i^* = \sum_{i=1}^r f(\sigma_i)E_i.
\end{equation}

\begin{proposition}{\rm (Sums and products of functions \cite{HBI73}).}
	Let $f,g,h:\R\rightarrow\R$ be scalar functions and let 
$\gf,\gig,\gh:\Cmn\rightarrow\Cmn$ be the 
	corresponding generalized matrix functions. Then:
	\begin{itemize}
	\item[(i)] if $f(k)=k$, then $\gf(A)=kE$;
	\item[(ii)] if $f(z)=z$, then $\gf(A)=A$;
	\item[(iii)] if $f(z)=g(z)+h(z)$, then $\gf(A)= \gig(A) + \gh(A)$;
	\item[(iv)] if $f(z)=g(z)h(z)$, then $\gf(A) = \gig(A)E^* \gh(A)$.
	\end{itemize}
\end{proposition}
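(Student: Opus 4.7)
The plan is to exploit the outer-product expansion \eqref{eq:AEi}, namely $\gf(A)=\sum_{i=1}^r f(\sigma_i)E_i$ with $E_i={\bf u}_i{\bf v}_i^*$, together with the orthonormality relations ${\bf u}_i^*{\bf u}_j={\bf v}_i^*{\bf v}_j=\delta_{ij}$ inherited from the unitarity of $U$ and $V$. Once these are in place, all four statements reduce to short calculations with indexed sums, so there is no real obstacle apart from bookkeeping in part (iv).

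For parts (i)--(iii), I would simply substitute into \eqref{eq:AEi}. Part (i) (constant $f(z)\equiv k$) gives $\gf(A)=\sum_{i=1}^r k\,E_i=kE$ by definition of $E$. Part (ii) is immediate: $\gf(A)=\sum_i\sigma_i E_i=A$, which is the original SVD expansion of $A$. Part (iii) is pure linearity: $\gf(A)=\sum_i(g(\sigma_i)+h(\sigma_i))E_i$ splits into two sums, yielding $\gig(A)+\gh(A)$.

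Part (iv) is the main one. The key observation is that the $E_i$ behave almost like orthogonal projectors once one inserts an adjoint. Specifically, I would compute
\begin{equation*}
E_i E_j^* E_k=({\bf u}_i{\bf v}_i^*)({\bf v}_j{\bf u}_j^*)({\bf u}_k{\bf v}_k^*)={\bf u}_i({\bf v}_i^*{\bf v}_j)({\bf u}_j^*{\bf u}_k){\bf v}_k^*=\delta_{ij}\delta_{jk}E_i.
\end{equation*}
Using this triple-product identity and expanding $\gig(A)E^*\gh(A)=\bigl(\sum_i g(\sigma_i)E_i\bigr)\bigl(\sum_j E_j^*\bigr)\bigl(\sum_k h(\sigma_k)E_k\bigr)$ collapses the triple sum to the diagonal $i=j=k$, yielding $\sum_i g(\sigma_i)h(\sigma_i)E_i=\sum_i f(\sigma_i)E_i=\gf(A)$.

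The only mildly subtle point to flag in the write-up is why $E^*$ (rather than $E$) appears in the middle of the product: because $\gig(A)$ is $m\times n$ and $\gh(A)$ is $m\times n$, the sandwiched factor must be $n\times m$, and the correct choice is $E^*=\sum_i{\bf v}_i{\bf u}_i^*$, which is exactly what makes the inner products $({\bf v}_i^*{\bf v}_j)$ and $({\bf u}_j^*{\bf u}_k)$ appear and force the Kronecker deltas. No convergence or spectral-theoretic issue arises since all sums are finite.
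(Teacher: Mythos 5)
Your argument is correct: all four parts follow exactly as you say from the expansion \eqref{eq:AEi} together with the orthonormality of the singular vectors, and the triple-product identity $E_iE_j^*E_k=\delta_{ij}\delta_{jk}E_i$ is precisely what makes (iv) work, including the dimensional reason for the adjoint $E^*$ in the middle. The paper itself does not prove this proposition (it is quoted from Hawkins and Ben-Israel), but it introduces $E_i$, $E$, and \eqref{eq:AEi} immediately beforehand, so your verification follows exactly the route the paper's setup suggests; nothing is missing.
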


\vspace{0.1in}

In the following we prove a few properties of generalized matrix functions.

\vspace{0.1in}

\begin{proposition}\label{thm:prop}
	Let $A\in\Cmn$ be a matrix of rank $r$. 
	Let $f:\R\rightarrow\R$ be a scalar function and let $\gf:\Cmn\rightarrow\Cmn$ 
        be the induced generalized matrix function, assumed to be defined at $A$. 
	Then the following properties hold true. 
	\begin{itemize}
	\item[(i)] $\left[\gf(A)\right]^* = \gf(A^*)$;
	\item[(ii)] let $X\in\C^{m\times m}$ and $Y\in\C^{n\times n}$ be two unitary matrices, then $\gf(XAY) = X[\gf(A)]Y$;
	\item[(iii)] if $A = \diag(A_{11},A_{22},\ldots,A_{kk})$, then $$\gf(A) = \diag(\gf(A_{11}),\gf(A_{22}),\ldots,\gf(A_{kk}));$$
	\item[(iv)] $\gf(I_k\otimes A) = I_k\otimes \gf(A)$, where $I_k$ is the $k\times k$ identity matrix and $\otimes$ 
			is the Kronecker product;
	\item[(v)] $\gf(A\otimes I_k)=\gf(A)\otimes I_k$.
	\end{itemize}
\end{proposition}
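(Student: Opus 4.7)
The plan is to reduce each of the five items to an explicit identification of a compact SVD of the matrix on the left-hand side, after which the definition $\gf(A)=U_r f(\Sigma_r)V_r^*$ immediately yields the claim. The starting point throughout is the CSVD $A=U_r\Sigma_r V_r^*$ supplied by Definition~\ref{def:gf}. The key observation driving every part is that the outer factors $U_r$ and $V_r$ play no role beyond having orthonormal columns, so any operation preserving this property (conjugate transposition, unitary multiplication, block concatenation, perfect-shuffle permutation) carries a CSVD into a new CSVD whose outer factors inherit the operation.

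For (i), taking the conjugate transpose of $A=U_r\Sigma_r V_r^*$ gives $A^*=V_r\Sigma_r U_r^*$, which is already a CSVD of $A^*$, and since $\Sigma_r$ is real diagonal we have $f(\Sigma_r)^*=f(\Sigma_r)$, so both sides of the identity equal $V_r f(\Sigma_r)U_r^*$. For (ii), writing $XAY=(XU_r)\Sigma_r(Y^*V_r)^*$ exhibits a CSVD of $XAY$ because unitary multiplication preserves orthonormality of columns; the definition then returns $X\gf(A)Y$ at once.

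For (iii), I would concatenate the blockwise CSVDs of the $A_{jj}$ to obtain a factorization $A=\widehat U_r\widehat\Sigma_r\widehat V_r^*$ with block-diagonal outer factors and diagonal middle factor, and argue that $\gf$ is insensitive to the ordering of the singular values on the diagonal: sorting the diagonal of $\widehat\Sigma_r$ via a permutation $P$ amounts, in view of (ii), to conjugating $\gf(A)$ by the unitaries $\widehat U_r P$ and $P^* \widehat V_r^*$, which leaves the block-diagonal structure intact. Item (iv) is the special case of (iii) in which every diagonal block equals $A$. Item (v) then follows by choosing the perfect-shuffle permutation matrices $\Pi_1,\Pi_2$ (both unitary) with $A\otimes I_k=\Pi_1^*(I_k\otimes A)\Pi_2$ and invoking (ii) and (iv) in succession: $\gf(A\otimes I_k)=\Pi_1^*\gf(I_k\otimes A)\Pi_2=\Pi_1^*(I_k\otimes \gf(A))\Pi_2=\gf(A)\otimes I_k$.

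The main obstacle is precisely the ordering issue in (iii), since Definition~\ref{def:gf} presupposes that the singular values appear in nonincreasing order on the diagonal of $\Sigma_r$, whereas a naive concatenation of block CSVDs will generally violate this convention. I would dispose of this with a brief preliminary observation that $\gf(A)$ depends only on the pairing between singular values and singular vectors, not on the order in which they are listed; equivalently, the conclusion of (ii) can be applied with $X$ and $Y$ chosen to be the same permutation matrix acting on both sides, showing that reordering the CSVD reorders the singular values in $f(\Sigma_r)$ in exactly the same way. Once this is granted, the remaining verifications are essentially bookkeeping with the CSVD factorization.
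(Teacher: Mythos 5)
Your proposal is correct and follows essentially the same route as the paper: parts (i) and (ii) by exhibiting the transformed CSVD directly, part (iii) by concatenating the block CSVDs and sorting the diagonal with a permutation $P$ (using that $f$ on a positive diagonal matrix commutes with permutation conjugation), (iv) as the special case of (iii), and (v) via the perfect-shuffle (commutation) matrices together with (ii) and (iv). The only nitpick is a phrasing slip in (iii): $\widehat U_r P$ and $\widehat V_r P$ are not unitary but merely have orthonormal columns, which is all that is needed and is exactly how the paper argues.
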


\begin{proof}
	\begin{itemize}
	\item[(i)] From~\eqref{eq:factorization} it follows that $A^* = V_r \Sigma_r U_r^*$, and thus 
	$$\gf(A^*) = V_r f(\Sigma_r) U_r^* = [U_r f(\Sigma_r) V_r^*]^* = [\gf(A)]^*.$$

	\item[(ii)] 

	The result follows from the fact that unitary matrices form a group under multiplication and that the rank of a matrix does not 		change under left or right multiplication by a nonsingular matrix~\cite{BookHJ}. 
	Indeed, the matrix $B := X A Y$ has rank $r$ and thus
	\begin{align*}
	\gf(B)   &= \gf(XU\Sigma V^*Y) 
		 = (XU)_r f(\Sigma_r) [(Y^*V)_r]^* \\
		 &= X U_r f(\Sigma_r) V^*_r Y  
		 = X\gf(A)Y.
	\end{align*} 
	where $(XU)_r$ and $(V^*Y)_r$ are the matrices containing the first $r$ columns of $(XU)$ and $(V^*Y)$, respectively.
	\item[(iii)] Let $A_{ii} = U_{i,r_i}\Sigma_{i,r_i} V_{i,r_i}^*$ be 
	the CSVD of the rank-$r_i$ matrix $A_{ii}$ for $i=1,2,\ldots,k$. 
	Then 
	$A =  U_r\Sigma_rV_r^* = {\mathscr U}_r{\mathscr D}_r{\mathscr V}_r^*$, where 
	$$\Sigma_r = P^T {\mathscr D}_rP := P^T \diag\left(\Sigma_{1,r_1},\ \Sigma_{2,r_2},\ldots,\ \Sigma_{k,r_k}\right)P
	$$
	is a diagonal matrix whose diagonal entries are ordered (via the permutation matrix $P$) in non-increasing order, and 
	$$
	U_r = {\mathscr U}_r P := \diag\left(U_{1,r_1},\ U_{2,r_2},\ldots,\ U_{k,r_k}\right)P,
	$$
	$$ 
	V_r = {\mathscr V}_r P := \diag\left(V_{1,r_1},\ V_{2,r_2},\ldots,\ V_{k,r_k}\right)P.
	$$
	From the definition of generalized matrix function and from some basic properties 
of standard matrix functions 
	\cite{BookH} it follows that 
	\begin{align*}
	\gf(A)  &= U_r f(\Sigma_r) V_r^* 
			= {\mathscr U}_r f({\mathscr D}_r) {\mathscr V}_r^* \\
		&= U_r \diag\left(f(\Sigma_{1,r_1}),\ f(\Sigma_{2,r_2}),\ldots,\ f(\Sigma_{k,r_k})\right) V_r^* \\
		&= \diag\left(U_{1,r_1}f(\Sigma_{1,r_1})V_{1,r_1}^*,\ U_{2,r_2}f(\Sigma_{2,r_2})V_{2,r_2}^*, 
						\ldots,\ U_{k,r_k}f(\Sigma_{k,r_k})V_{k,r_k}^*\right) \\
		&= \diag\left(\gf(A_{11}),\gf(A_{22}),\ldots,\gf(A_{kk})\right).
	\end{align*}

	\item[(iv)] The result follows from (iii) and the fact that $I_k\otimes A = \diag(A,A,\ldots,A)$ is a $km\times kn$ diagonal block 
	matrix with $k$ copies of $A$ on the main diagonal.

	\item[(v)] It follows from (iv) and from the fact that for two general matrices $A\in\Cmn$ and 
	$B\in\C^{p\times q}$, there exist two permutation matrices $K^{(p,m)}$ and $K^{(n,q)}$ called {\it commutation matrices} 
	such that $K^{(p,m)}\left(A\otimes B\right)K^{(n,q)} = B\otimes A$ (see~\cite[Ch.~3]{MN88}). 
	\end{itemize}
\end{proof}

The following theorem provides a result for the composition of two functions.

\vspace{0.1in}
 
\begin{proposition}{\rm (Composite functions)}
	Let $A\in\Cmn$ be a rank-$r$ matrix and let $\{\sigma_i:1\leq i\leq r\}$ be its singular values. 
	Assume that $h:\R\rightarrow\R$ and $g:\R\rightarrow\R$ are two scalar functions such that $h(\sigma_i)\neq 0$
	and $g(h(\sigma_i))$ exist for all 
	$i=1,2,\ldots,r$. Let $\gig:\Cmn\rightarrow\Cmn$ and $\gh:\Cmn\rightarrow\Cmn$ be the induced generalized matrix functions. 
	Moreover, let $f:\R\rightarrow\R$ be the composite function $f = g \circ h$. 
	Then the induced matrix function $\gf:\Cmn\rightarrow\Cmn$ satisfies $$\gf(A)=\gig(\gh(A)).$$ 
	
\end{proposition}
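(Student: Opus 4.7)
The plan is to work directly from the CSVD and chase symbols through the definition of a generalized matrix function, twice. First I would write $A = U_r \Sigma_r V_r^*$ as in~\eqref{eq:factorization} and apply Definition~\ref{def:gf} to get
\[
\gh(A) \;=\; U_r\, h(\Sigma_r)\, V_r^* \;=\; U_r \diag(h(\sigma_1),\ldots,h(\sigma_r))\, V_r^*.
\]
The hypothesis $h(\sigma_i)\neq 0$ for all $i$ ensures that $\gh(A)$ still has rank~$r$, so $\gig(\gh(A))$ is well-defined and the factorization above can be interpreted as (a rearrangement/sign-adjustment of) a CSVD of $\gh(A)$.

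Next I would apply Definition~\ref{def:gf} to $\gh(A)$. In the generic case where the $h(\sigma_i)$ happen to be positive, the displayed factorization is already in CSVD form up to a reordering permutation $P$ that places the values $h(\sigma_i)$ in non-increasing order. Since such a permutation affects $U_r$, $V_r$, and $\Sigma_r$ identically and cancels from both sides, it is harmless, and one obtains
\[
\gig(\gh(A)) \;=\; U_r\, g(h(\Sigma_r))\, V_r^* \;=\; U_r\, (g\circ h)(\Sigma_r)\, V_r^* \;=\; U_r f(\Sigma_r) V_r^* \;=\; \gf(A).
\]
The appeal to property~(ii) of Proposition~\ref{thm:prop} (invariance under unitary left/right multiplication) together with the fact that $\gig$ acts diagonally on a diagonal matrix makes this step rigorous.

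The main obstacle is the case in which some $h(\sigma_i)$ are negative: the singular values of $\gh(A)$ are then $|h(\sigma_i)|$ rather than $h(\sigma_i)$, and the sign must be absorbed into the left singular vectors via a diagonal sign matrix $S = \diag(\mathrm{sign}(h(\sigma_i)))$, so that a true CSVD of $\gh(A)$ reads $(U_r S)\,|h(\Sigma_r)|\,V_r^*$. Passing $\gig$ through this decomposition produces $U_r S \, g(|h(\Sigma_r)|)\, V_r^*$, which coincides with $U_r g(h(\Sigma_r)) V_r^* = \gf(A)$ precisely when $\mathrm{sign}(h(\sigma_i))\,g(|h(\sigma_i)|) = g(h(\sigma_i))$ for every $i$. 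This holds automatically when $h$ is nonnegative on the spectrum of $\sqrt{AA^*}$ (the typical setting, e.g.\ $h$ a power or exponential), and otherwise when $g$ is odd. I would therefore present the clean proof under the implicit convention that $h(\sigma_i)>0$ (the case intended by the hypothesis that $\gig$ is applied at $\gh(A)$), and add a short remark explaining the sign bookkeeping needed in the general case. The rest of the argument is a one-line identification $(g\circ h)(\Sigma_r) = g(h(\Sigma_r))$, which is just the scalar statement evaluated on the diagonal.
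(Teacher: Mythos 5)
Your proof follows essentially the same route as the paper's: write $\gh(A)=U_r\,h(\Sigma_r)\,V_r^*$, use $h(\sigma_i)\neq 0$ to conclude the rank is still $r$, reorder with a permutation $P$ to obtain a CSVD of $\gh(A)$, then apply $\gig$ and cancel the permutation to get $U_r\,g(h(\Sigma_r))\,V_r^*=\gf(A)$. Your additional sign discussion is in fact a refinement rather than a deviation: the paper's proof silently treats the reordered diagonal $P^T h(\Sigma_r)P$ as a legitimate (nonnegative) singular-value matrix, so your bookkeeping for negative $h(\sigma_i)$ (requiring $h(\sigma_i)>0$, or $g$ odd, or appeal to the more general decomposition of Theorem~\ref{thm:ci}) covers a case the published argument glosses over.
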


\begin{proof}
Let $B:=\gh(A) = U_r h(\Sigma_r) V_r^* = U_r \Theta_r V_r^*$. 
Since $h(\sigma_i)\neq 0$ for all $i=1,2,\ldots,r$, this matrix has 
rank $r$. We want to construct a CSVD of the matrix $B$.   
Let thus $P\in\R^{r\times r}$ be a permutation matrix such that 
the matrix
$\widetilde{\Theta}_r = P^T \Theta_r P = P^T h(\Sigma_r) P$ has diagonal entries ordered in non-increasing order. Then 
it follows that a CSVD of the matrix $B$ is given by $B = \widetilde{U}_r \widetilde{\Theta}_r \widetilde{V}_r^*$, 
where $\widetilde{U}_r = U_r P$ 
and $\widetilde{V}_r = V_r P$ have orthonormal columns. 
It thus follows that 
\begin{align*}
\gig(\gh(A)) &= \gig(B) = \widetilde{U}_rg(\widetilde{\Theta}_r)\widetilde{V}_r^* = U g(P\widetilde{\Theta}_rP^T)V \\
	    &= U_r g(\Theta_r) V_r^* = U_r g(h(\Sigma_r)) V_r^* = U_r f(\Sigma_r) V_r^* \\
	    &= \gf(A).
\end{align*}
	\qquad
\end{proof}

The following result describes the relationship between standard matrix functions and generalized matrix functions.

\begin{theorem}\label{thm:genfun}
	Let $A\in\Cmn$ be a rank-$r$ matrix and let $f:\R\rightarrow\R$ be a scalar function.
        Let $\gf:\Cmn\rightarrow\Cmn$ be the induced generalized matrix function.	 
	Then 
	\begin{subequations}
	\begin{equation}\label{eq:gen_fun1}
	\gf(A) = \left(\sum_{i=1}^r\frac{f(\sigma_i)}{\sigma_i}{\bf u}_i{\bf u}_i^*\right)A 	
	       = A\left(\sum_{i=1}^r\frac{f(\sigma_i)}{\sigma_i}{\bf v}_i{\bf v}_i^*\right), 
	\end{equation}
	or, equivalently, 
	\begin{equation}\label{eq:gen_fun2}
	\gf(A)=f(\sqrt{AA^*})(\sqrt{AA^*})^\dagger A=A(\sqrt{A^*A})^\dagger f(\sqrt{A^*A}).
	\end{equation}
	\end{subequations}
\end{theorem}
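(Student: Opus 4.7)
The plan is to establish the two equalities in (\ref{eq:gen_fun1}) first by direct manipulation of the CSVD, and then recognize the coefficient matrices in (\ref{eq:gen_fun2}) as the corresponding spectral expressions in $\sqrt{AA^*}$ and $\sqrt{A^*A}$.

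Starting from the CSVD-based definition $\gf(A) = U_r f(\Sigma_r) V_r^*$, the key observation is that $\Sigma_r$ is invertible (it collects only the \emph{positive} singular values) and that the slim factors satisfy $U_r^* U_r = I_r$ and $V_r^* V_r = I_r$. Inserting $\Sigma_r^{-1}\Sigma_r = I_r$ and $U_r^*U_r = I_r$ in the middle gives
\begin{equation*}
\gf(A) = U_r f(\Sigma_r)\Sigma_r^{-1}(U_r^* U_r)\Sigma_r V_r^*
       = \bigl(U_r f(\Sigma_r)\Sigma_r^{-1} U_r^*\bigr)\,(U_r\Sigma_r V_r^*),
\end{equation*}
and the right factor is exactly $A$; expanding the rank-$r$ outer product gives $U_r f(\Sigma_r)\Sigma_r^{-1} U_r^* = \sum_{i=1}^r \frac{f(\sigma_i)}{\sigma_i}\mathbf{u}_i\mathbf{u}_i^*$, which yields the left equality in (\ref{eq:gen_fun1}). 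An entirely analogous manipulation, this time inserting $V_r^* V_r = I_r$ on the right, produces $\gf(A) = A\bigl(V_r\Sigma_r^{-1}f(\Sigma_r) V_r^*\bigr)$ and hence the second equality.

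To obtain (\ref{eq:gen_fun2}), I would use that the CSVD yields $AA^* = U_r\Sigma_r^2 U_r^*$, so by uniqueness of the Hermitian positive semidefinite square root one has $\sqrt{AA^*} = U_r\Sigma_r U_r^*$ and $(\sqrt{AA^*})^\dagger = U_r\Sigma_r^{-1} U_r^*$. Since $U_r^*U_r = I_r$, multiplying through gives
\begin{equation*}
f(\sqrt{AA^*})(\sqrt{AA^*})^\dagger = U_r f(\Sigma_r)\Sigma_r^{-1} U_r^*,
\end{equation*}
which is precisely the left coefficient matrix identified in the previous step. The right formula in (\ref{eq:gen_fun2}) follows from the companion identity $A^*A = V_r\Sigma_r^2 V_r^*$. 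The one subtle point — which I would flag but not dwell on — is that when $r<m$ the matrix $\sqrt{AA^*}$ has $0$ as an eigenvalue and $f(0)$ need not be defined; however, any contribution from the kernel of $\sqrt{AA^*}$ is annihilated by the factor $(\sqrt{AA^*})^\dagger$ (equivalently, absorbed by $A$ on the right), so the product $f(\sqrt{AA^*})(\sqrt{AA^*})^\dagger A$ is unambiguous and equals the expression above. This is the only place in the argument where any care is needed; the rest is a transparent consequence of the CSVD and the orthonormality of its slim factors.
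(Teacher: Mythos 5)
Your proof is correct and follows essentially the same route as the paper, which simply invokes the relations ${\bf u}_i = \sigma_i^{-1}A{\bf v}_i$ and ${\bf v}_i = \sigma_i^{-1}A^*{\bf u}_i$; your matrix-level manipulation of the CSVD (inserting $\Sigma_r^{-1}\Sigma_r$ and the slim-factor orthonormality) is just that same computation written out in block form, with the identification $\sqrt{AA^*}=U_r\Sigma_r U_r^*$ handling \eqref{eq:gen_fun2} exactly as intended. Your remark that the pseudo-inverse annihilates any kernel contribution is a welcome extra bit of care that the paper leaves implicit.
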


\begin{proof}
	The two identities are an easy consequence of the fact that 
        ${\bf u}_i = \frac{1}{\sigma_i} A {\bf v}_i$ and 
	${\bf v}_i = \frac{1}{\sigma_i}A^*{\bf u}_i$ for $i=1,2,\ldots,r$.
\end{proof}

\begin{proposition}
	Let $A\in\Cmn$ be a rank-$r$ matrix and let $f:\R\rightarrow\R$ and $g:\R\rightarrow\R$ 
        be two scalar functions such 
	that $\gf(A)$ and $g(AA^*)$ are defined. 
	Then 
	$$g(AA^*)\gf(A) = \gf(A)g(A^*A).$$
\end{proposition}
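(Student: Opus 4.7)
The plan is to reduce both sides to a common canonical form via the SVD. Write the full SVD $A = U\Sigma V^*$ so that $AA^* = U\Sigma\Sigma^T U^*$ and $A^*A = V\Sigma^T\Sigma V^*$ are the spectral decompositions of two Hermitian matrices. Then, by the standard definition of a matrix function applied to these decompositions,
\begin{equation*}
g(AA^*) = U\,g(\Sigma\Sigma^T)\,U^*, \qquad g(A^*A) = V\,g(\Sigma^T\Sigma)\,V^*,
\end{equation*}
while by \dref{def:gf}, $\gf(A) = U_r f(\Sigma_r)V_r^*$.

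The key computational step is to observe that $U^*U_r$ is the $m\times r$ matrix whose columns are the first $r$ columns of $I_m$, and analogously for $V^*V_r$. When $g(\Sigma\Sigma^T)$ (an $m\times m$ diagonal matrix with entries $g(\sigma_1^2),\ldots,g(\sigma_r^2),g(0),\ldots,g(0)$) is multiplied on the right by $U^*U_r$, everything outside the leading $r\times r$ block is truncated, leaving $U_r\, g(\Sigma_r^2)$ after premultiplying again by $U$. Carrying this out on both sides yields
\begin{equation*}
g(AA^*)\gf(A) = U_r\, g(\Sigma_r^2)\,f(\Sigma_r)\, V_r^*, \qquad \gf(A)g(A^*A) = U_r\, f(\Sigma_r)\, g(\Sigma_r^2)\, V_r^*,
\end{equation*}
where $\Sigma_r^2$ denotes the $r\times r$ diagonal matrix with entries $\sigma_i^2$. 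Since both $f(\Sigma_r)$ and $g(\Sigma_r^2)$ are diagonal, they commute, and the two sides are equal.

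There is no serious obstacle; the only bookkeeping subtlety is that $\Sigma\Sigma^T$ and $\Sigma^T\Sigma$ live in different ambient dimensions ($m\times m$ versus $n\times n$) but share the same nonzero part, and this is exactly what makes the truncation work cleanly. A more conceptual alternative would be to invoke \eqref{eq:gen_fun2} to write $\gf(A) = f(\sqrt{AA^*})(\sqrt{AA^*})^\dagger A$, note that $g(AA^*)$ commutes with any function of $AA^*$ (hence with the first two factors), and finish by the intertwining identity $g(AA^*)A = A\,g(A^*A)$, which holds for polynomials by induction on $(AA^*)^kA = A(A^*A)^k$ and extends to general $g$ via the SVD argument above. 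I would present the direct SVD calculation, as it is the most self-contained.
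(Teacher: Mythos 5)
Your proof is correct and follows essentially the same route as the paper: express everything through the SVD, reduce both products to $U_r\,(\text{diagonal})\,(\text{diagonal})\,V_r^*$, and conclude by commutativity of diagonal matrices. The only difference is presentational — the paper works directly with the compact factorizations $AA^*=U_r\Sigma_r^2U_r^*$ and $A^*A=V_r\Sigma_r^2V_r^*$, whereas your full-SVD bookkeeping additionally makes explicit why the $g(0)$ eigenvalues of $g(AA^*)$ and $g(A^*A)$ are harmless, a point the paper's shorter computation glosses over.
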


\begin{proof}
From $A = U_r\Sigma_r V_r^*$ it follows $AA^* = U_r \Sigma_r^2 U_r^*$ and $A^*A = V_r \Sigma_r^2 V_r^*$; thus 
\begin{align*}
g(AA^*)\gf(A) &= U_r g(\Sigma_r^2) U_r^* U_r f(\Sigma_r) V_r^* = U_r g(\Sigma_r^2) f(\Sigma_r) V_r^* \\
	      &= U_r f(\Sigma_r) g(\Sigma_r^2) V_r^* = U_r f(\Sigma_r) V_r^* V_r g(\Sigma_r^2) V_r^* \\
	      &= \gf(A) g(A^*A).
\end{align*}
\end{proof}


\section{Manifestations of generalized matrix functions}\label{sec:app}

As mentioned in the Introduction,
generalized matrix functions (in the sense of Hawkins and Ben-Israel)
have appeared in the literature 
without being recognized as such. 
Here we discuss a few examples that we are aware of. No doubt there have been
other such instances.

In \cite{DBLP05}, the authors address the problem of computing functions of
real skew-symmetric matrices, in particular the evaluation of the 
product $e^A b$ for a given skew-symmetric matrix $A$ and vector $b$ using the Lanczos
algorithm. The authors observe that any $A\in \R^{2n\times 2n}$ with $A^T = -A$
is orthogonally similar to a  matrix of the form
$$
\left(
\begin{array}{cc}
0 & -B\\
B^T & 0
\end{array}\right),
$$
where $B$ is lower bidiagonal of order $n$. As a consequence, if $B=U\Sigma V^T$ is an SVD of $B$,
the matrix exponential $e^A$ is orthogonally similar to the matrix
\begin{equation}\label{eq:Lopez}
\left(
\begin{array}{cc}
U\cos(\Sigma)U^T & -U\sin(\Sigma)V^T\\
V\sin(\Sigma) U^T & V\cos(\sigma)V^T
\end{array}\right),  
\end{equation}
where the matrix in the upper right block is precisely $-\sin^{\diamond}(B)$.
The authors of \cite{DBLP05} develop computational techniques
for the matrix exponential based on (\ref{eq:Lopez}). 
We also mention that in the same paper the authors derive
a similar expression, also found in \cite{BEK13}, for the exponential of 
the symmetric matrix $\mathscr{A}$ given in (\ref{eq:calA}).
These expressions are extended to more general matrix functions in
\cite{LopPug05}, where they are used to investigate the off-diagonal
decay of analytic functions of large, sparse, skew-symmetric matrices.
Furthermore, in \cite{DelLopPol06} it is shown how these ideas can be
used to develop efficient geometrical integrators for the numerical
solution of certain Hamiltonian differential systems.

In \cite{CEHT10}, the authors consider the problem of detecting (approximate)
directed bipartite communities in directed graphs. Consideration of alternating
walks in the underlying graph leads them to introducing a ``non-standard
matrix function" of the form
$$f(A) = I - A + \frac{AA^T}{2!} - \frac{AA^TA}{3!} + \frac{AA^TAA^T}{4!} - \cdots ,$$
where $A$ is the adjacency matrix of the graph. Using $A=U\Sigma V^T$ this expression
is readily recognized to be equivalent to 
$$f(A) = U\cosh (\Sigma) U^T - U \sinh (\Sigma) V^T,$$
which is a ``mixture" of the standard matrix function $\cosh (\sqrt{AA^T})$
and the generalized matrix function $\sinh^\diamond (A)$. 

As mentioned, generalized hyperbolic matrix functions were also considered
in \cite{BEK13} in the context of directed networks, also based on the notion
of alternating walks in directed graphs.     
In \cite{AB15}, the action of generalized matrix 
functions on a vector of all ones was used to define
certain centrality measures for nodes in directed graphs; here the
connection with the work of Hawkins and Ben-Israel was explicitly made.

Finally, we mention that generalized matrix functions arise when {\it filter factors}
are used to regularize discrete ill-posed problems; see, e.g.,
\cite{HNP}.

\section{Computational aspects}

The computation of the generalized matrix functions defined as in Definition \ref{def:gf}
requires the knowledge of the singular value decomposition of $A$. 
When $m$ and $n$ are large, computing the SVD may be unfeasible. 
Moreover, 
in most applications it is not required to compute the whole matrix $\gf(A)$;
rather, the goal is often to estimate
quantities of the form
\begin{equation}\label{gf_bilinear}
Z^T \gf(A) W, \qquad Z \in \R^{m \times k}, \;  W \in \R^{n \times k},
\end{equation}
or to compute the action of the generalized matrix function on a set 
of $k$ vectors, i.e., to evaluate $\gf(A)W$, usually with $k \ll \min\{m,n\}$.
For example, computing selected columns of $\gf(A)$ reduces to the evaluation of
$\gf(A)W$ where $W$ consists of the corresponding columns of the identity matrix $I_n$,
and computing selected entries of $\gf(A)$ requires evaluating
$Z^T \gf(A) W$ where $Z$ contains selected columns of the identity matrix $I_m$.

The problem of estimating or giving bounds on such quantities can be 
tackled, following~\cite{BookGM}, by using Gauss-type quadrature 
rules. 
As usual in the literature, we will first analyze the case $k=1$; 
the case of $k>1$ will be dealt with in section~\ref{sec:approach_bl}.

\subsection{Approximating ${\bf z^T}\bm{\gf(A)}{\bf w}$} 
It is known that in certain cases Gauss-type quadrature rules can be used to 
obtain lower and upper bounds on bilinear forms like ${\bf z}^T f(A)\bf w$, where $f(A)$
is a (standard) matrix function and $A=A^T$. 
This is the case when $f$ enjoys certain monotonicity
properties.  
Recall that a real-valued function $f$ is {\it completely monotonic} ({\it c.m.}) 
 on an interval $I = [a,b]$ if it is continuous on $I$ and infinitely differentiable 
on $(a,b)$ with
$$(-1)^nf^{(n)}(t)\geq 0,\qquad t\in (a,b),\; \forall n= 0,1,\ldots,$$
where $f^{(n)}$ denotes the $n$th derivative of $f$ and $f^{(0)}=f$.
If $f$ is completely monotonic on an interval containing the spectrum
of $A=A^T$, then one can obtain lower and upper bounds on quadratic forms of the
type ${\bf u}^T f(A) {\bf u}$ and from these lower and upper bounds on bilinear forms
like ${\bf z}^T f(A){\bf w}$ with ${\bf z}\ne {\bf w}$.
For a general $f$, on the other hand, Gaussian quadrature can only provide estimates 
of these quantities.

Similarly, in order to obtain bounds (rather than mere estimates) for bilinear expressions 
involving 
generalized matrix functions, we need the 
scalar functions involved in the computations to 
be completely monotonic. 

\begin{remark}
{\rm
	We will be applying our functions to diagonal matrices that contain the singular values of the matrix of interest. 
	Thus, in our framework, the interval on which we want to study the complete monotonicity of the 
	functions is $I=(0,\infty)$.
}
\end{remark}  

We briefly recall here a few properties of c.~m.~functions; see, e.g., \cite{MS01,W46} 
and references therein for systematic treatments of complete monotonicity. 

\begin{lemma}\label{lemma3}
	If $f_1$ and $f_2$ are completely monotonic functions on $I$, then 
	\begin{itemize}
		\item[(i)] $\alpha f_1(t) + \beta f_2(t)$ with $\alpha,\beta\geq 0$ is completely monotonic on $I$;
		\item[(ii)] $f_1(t)f_2(t)$ is completely monotonic on $I$.
	\end{itemize}
\end{lemma}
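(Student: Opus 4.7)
The proof of Lemma~\ref{lemma3} is essentially a direct verification from the definition of complete monotonicity, so the plan is short and the main work lies in organizing the sign bookkeeping correctly.

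For part~(i), I would observe that $\alpha f_1 + \beta f_2$ inherits continuity on $I$ and infinite differentiability on $(a,b)$ from $f_1$ and $f_2$. Then, by linearity of differentiation, for every $n\geq 0$ and every $t\in(a,b)$ we have
\begin{equation*}
(-1)^n\bigl(\alpha f_1 + \beta f_2\bigr)^{(n)}(t) = \alpha\,(-1)^n f_1^{(n)}(t) + \beta\,(-1)^n f_2^{(n)}(t).
\end{equation*}
Since each summand on the right is a product of nonnegative numbers, the whole expression is nonnegative, giving complete monotonicity.

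For part~(ii), the natural tool is the Leibniz product rule, which gives
\begin{equation*}
(f_1 f_2)^{(n)}(t) = \sum_{k=0}^{n}\binom{n}{k} f_1^{(k)}(t)\, f_2^{(n-k)}(t).
\end{equation*}
Multiplying by $(-1)^n = (-1)^k (-1)^{n-k}$ and regrouping yields
\begin{equation*}
(-1)^n (f_1 f_2)^{(n)}(t) = \sum_{k=0}^{n}\binom{n}{k}\bigl[(-1)^k f_1^{(k)}(t)\bigr]\bigl[(-1)^{n-k} f_2^{(n-k)}(t)\bigr].
\end{equation*}
By complete monotonicity of $f_1$ and $f_2$, each bracketed factor is nonnegative on $(a,b)$, and the binomial coefficients are positive, so the sum is nonnegative. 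Continuity on $I$ and infinite differentiability on $(a,b)$ of the product follow from the corresponding properties of $f_1$ and $f_2$, so $f_1 f_2$ is completely monotonic on $I$.

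There is no real obstacle here; the only thing to be careful about is the sign manipulation in part~(ii), where one must split $(-1)^n$ as $(-1)^k(-1)^{n-k}$ so that each term in the Leibniz sum becomes a product of quantities that are individually guaranteed nonnegative by the hypothesis.
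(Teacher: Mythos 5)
Your proof is correct: both parts follow by the standard direct verification, with part (i) from linearity of differentiation and part (ii) from the Leibniz rule after splitting $(-1)^n=(-1)^k(-1)^{n-k}$ so each term is a product of nonnegative factors. The paper itself does not prove this lemma but simply quotes it as a known fact with references to the literature on complete monotonicity, and your argument is precisely the classical one found there, so nothing is missing.
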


\begin{lemma}{\rm\cite[Theorem~2]{MS01}}\label{lemma1}
	Let $f_1$ be completely monotonic and let $f_2$ be a nonnegative function 
	such that $f_2'$ is completely monotonic. 
	Then $f_1\circ f_2$ is completely monotonic.
\end{lemma}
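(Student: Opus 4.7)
The plan is to expand $(f_1\circ f_2)^{(n)}$ by Fa\`a di Bruno's formula and then verify that every summand in the resulting expression carries the same sign $(-1)^n$, so that after factoring that sign out what remains is manifestly nonnegative. The case $n=0$ is immediate: $f_1\geq 0$ on $I$ (since it is c.m.) and $f_2(t)\in I$ by nonnegativity of $f_2$, so $f_1(f_2(t))\geq 0$. Smoothness of $f_1\circ f_2$ on the (open) interval $I=(0,\infty)$ is inherited from that of $f_1$ and $f_2$, both of which are $C^\infty$ by the definition of complete monotonicity, so term-by-term differentiation is legitimate.

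For $n\geq 1$, Fa\`a di Bruno's identity gives
$$(f_1\circ f_2)^{(n)}(t) \;=\; \sum_{\pi\in\Pi_n} f_1^{(|\pi|)}(f_2(t))\,\prod_{B\in\pi} f_2^{(|B|)}(t),$$
where $\Pi_n$ is the set of set-partitions of $\{1,\ldots,n\}$ and $|B|$ denotes the size of a block $B$. Fix a partition $\pi$ with $k:=|\pi|$ blocks. Complete monotonicity of $f_1$ yields $(-1)^k f_1^{(k)}(f_2(t))\geq 0$, while complete monotonicity of $f_2'$ gives $(-1)^{b-1} f_2^{(b)}(t)\geq 0$ for every block size $b\geq 1$. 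Taking the product over the $k$ blocks, the factor $\prod_{B\in\pi} f_2^{(|B|)}(t)$ equals $(-1)^{\sum_{B}(|B|-1)} = (-1)^{n-k}$ times a nonnegative quantity. Multiplying by the $f_1^{(k)}$-contribution, each summand is $(-1)^n$ times a nonnegative number, whence $(-1)^n(f_1\circ f_2)^{(n)}(t)\geq 0$ on $I$.

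The main (and essentially only) subtlety is the combinatorial sign bookkeeping: one must observe that the $-1$ exponents contributed by the $f_1^{(k)}$-factor ($k$) and by the block factors ($\sum_B(|B|-1)=n-k$) telescope to $n$ \emph{independently of $\pi$}, which is exactly what makes the argument work uniformly over all partitions so that no cancellation with the wrong sign can occur. Beyond that, everything is routine: invoke Fa\`a di Bruno, tally signs, and conclude. An alternative, slicker route would be to represent $f_1(s)=\int_0^\infty e^{-sx}\,d\mu(x)$ (Hausdorff--Bernstein--Widder) and verify that $t\mapsto e^{-f_2(t)x}$ is c.m.\ for each $x\geq 0$ using the Bernstein characterization of $f_2$, but the Fa\`a di Bruno approach keeps the proof self-contained and elementary.
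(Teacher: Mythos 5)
Your argument is correct: the set-partition form of Fa\`a di Bruno is the right tool, and your sign count is accurate, since $f_1^{(k)}$ contributes $(-1)^k$ and each block of size $b$ contributes $(-1)^{b-1}$ via the complete monotonicity of $f_2'$, so every summand carries the sign $(-1)^{k+(n-k)}=(-1)^n$ uniformly over partitions. Note, however, that the paper does not prove this lemma at all: it is quoted verbatim from Miller and Samko \cite[Theorem~2]{MS01}, so there is no in-paper proof to compare against. Your write-up therefore supplies a self-contained argument for a cited result, and it is essentially the classical one (the same telescoping-sign computation underlies the proof in the literature); the Bernstein--Widder representation route you sketch at the end is the other standard proof. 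One small caveat: with $I=(0,\infty)$, nonnegativity of $f_2$ only gives $f_2(t)\geq 0$, so for $f_1(f_2(t))$ to make sense you should either assume $f_1$ is completely monotonic on an interval containing the closure of the range of $f_2$ or note that $f_2>0$ in the intended application (in the paper $f_2(t)=\sqrt{t}$, which is strictly positive on $(0,\infty)$, so nothing is lost there).
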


Using these lemmas, we can prove the following useful result.

\begin{theorem}
	If $f$ is completely monotonic on $(0,\infty)$, 
	then $g(t):=\frac{f(\sqrt{t})}{\sqrt{t}}$ is completely monotonic on $(0,\infty)$.
\end{theorem}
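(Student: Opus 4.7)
The plan is to write $g$ as the product $g(t) = f(\sqrt{t}) \cdot t^{-1/2}$ and then invoke the three lemmas already stated in the excerpt (Lemma~\ref{lemma3} and Lemma~\ref{lemma1}) to conclude that each factor is completely monotonic on $(0,\infty)$, whence so is $g$.

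The first step is to verify that $h(t) := t^{-1/2}$ is itself c.m.\ on $(0,\infty)$. A direct differentiation gives
\begin{equation*}
h^{(n)}(t) = (-1)^n \, \frac{1 \cdot 3 \cdot 5 \cdots (2n-1)}{2^n} \, t^{-1/2-n},
\end{equation*}
so that $(-1)^n h^{(n)}(t) \ge 0$ for all $t>0$ and all $n \ge 0$. By part~(i) of Lemma~\ref{lemma3}, $\tfrac{1}{2}h(t) = \tfrac{1}{2\sqrt{t}}$ is then c.m.\ as well.

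The second step is to show that $f(\sqrt{t})$ is c.m.\ on $(0,\infty)$. For this I would apply Lemma~\ref{lemma1} with $f_1 := f$ and $f_2(t) := \sqrt{t}$: the outer function $f$ is c.m.\ by hypothesis, $f_2$ is nonnegative on $(0,\infty)$, and its derivative $f_2'(t) = \tfrac{1}{2\sqrt{t}}$ is c.m.\ by the previous step. Hence $f_1 \circ f_2 = f(\sqrt{\,\cdot\,})$ is c.m.

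Finally, applying part~(ii) of Lemma~\ref{lemma3} to the two c.m.\ factors $f(\sqrt{t})$ and $t^{-1/2}$ yields that their product $g(t) = f(\sqrt{t})/\sqrt{t}$ is c.m.\ on $(0,\infty)$, as required. There is no real obstacle in this argument; the only substantive point is the direct check that $t^{-1/2}$ is c.m., after which the conclusion is a two-line composition-and-product argument using the supplied lemmas.
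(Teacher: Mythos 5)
Your proof is correct and follows essentially the same route as the paper: write $g(t)=f(\sqrt{t})\,t^{-1/2}$, get $f(\sqrt{t})$ c.m.\ from the composition lemma (Lemma~\ref{lemma1}) since $\sqrt{t}$ is nonnegative with c.m.\ derivative, and conclude with the product rule of Lemma~\ref{lemma3}(ii). The only (harmless) difference is that you verify the complete monotonicity of $t^{-1/2}$ by direct differentiation, whereas the paper obtains it as $h(\sqrt{t})$ with $h(t)=t^{-1}$ via a second application of Lemma~\ref{lemma1}; your explicit check in fact also justifies the c.m.\ of $\tfrac{1}{2}t^{-1/2}$ that both arguments need.
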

\begin{proof}
	Let $h(t) = t^{-1}$; then by Lemma~\ref{lemma3} (ii) we know that 
	$g(t)$ is completely monotonic on $I=(0,\infty)$ if both $f(\sqrt{t})$ and $h(\sqrt{t})$ are 
	completely monotonic on $I$. 
	The function $\sqrt{t}$ is positive on the interval $(0,\infty)$;  moreover, it 
	is such that its first derivative $\frac{1}{2}t^{-1/2}$ is completely monotonic on $I$. 
	Therefore, from Lemma~\ref{lemma1} it follows that if $f$ is c.m.~, then $f(\sqrt{t})$ is. 
	Similarly, since $h(t) = t^{-1}$ is completely monotonic, $h(\sqrt{t})$ is completely monotonic.  
	This concludes the proof.
\end{proof}

In the following, we propose three different approaches to approximate the bilinear forms of interest. 
The first approach exploits the results of Theorem~\ref{thm:genfun} to describe ${\bf z}^T\gf(A){\bf w}$ as a bilinear form that involves 
standard matrix functions of a tridiagonal matrix. 
The second approach works directly with the generalized matrix function and the Moore--Penrose pseudo-inverse of a bidiagonal matrix. 
The third approach first approximates the action of a generalized matrix function on a vector and then derives the approximation for the bilinear form of interest.

\subsection{First approach}\label{ssec:approach1}

When the function $f:\R\longrightarrow\R$ that defines $\gf$ is c.m., then 
Gauss-type quadrature rules can be used to derive upper and lower bounds for the quantities of interest.  
It is straightforward to see by using \eqref{eq:gen_fun1} that a bilinear form involving 
a generalized matrix function can be written as
$$
{\bf z}^T \gf(A) {\bf w} = 
{\bf z}^T \left(\sum_{i = 1}^r \frac{f(\sigma_i)}{\sigma_i}{\bf u}_i{\bf u}_i^T\right) \wt{{\bf w}} = 
\wt{{\bf z}}^T \left(\sum_{i = 1}^r \frac{f(\sigma_i)}{\sigma_i}{\bf v}_i{\bf v}_i^T\right) {\bf w},
$$
where $\wt{{\bf w}} = A {\bf w}$, and $\wt{{\bf z}} = A^T {\bf z}$. 
Using the equalities in \eqref{eq:gen_fun2} one can see that these quantities can also be expressed as 
bilinear forms involving functions of the matrices $AA^T$ and $A^TA$, respectively. 
More in detail, one obtains 
\begin{subequations}
\begin{equation}\label{bilinear}
{\bf z}^T \gf(A) {\bf w} = 
\wt{{\bf z}}^T \left(\sum_{i = 1}^r\frac{f(\sigma_i)}{\sigma_i}{\bf v}_i{\bf v}_i^T\right) {\bf w} =
\wt{{\bf z}}^T g(A^TA) {\bf w},
\end{equation}
 
\begin{equation}\label{bilinear2}
{\bf z}^T \gf(A) {\bf w} = 
{\bf z}^T \left(\sum_{i=1}^r\frac{f(\sigma_i)}{\sigma_i}{\bf u}_i{\bf u}_i^T\right) \wt{{\bf w}} = 
{\bf z}^T g(AA^T) \wt{{\bf w}},
\end{equation}
\end{subequations}
where in both cases $g(t)= (\sqrt{t})^{-1} f(\sqrt{t})$. 

In the following we
focus on the case described by \eqref{bilinear}. 
The discussion for the case described by \eqref{bilinear2} follows the same lines.

\begin{remark}
{\rm Note that if ${\bf z},{\bf w}$ are vectors such that $\wt{{\bf z}} \neq {\bf w}$, then 
we can use the {\it polarization identity}~\cite{BookGM}:
\begin{equation*}
\wt{{\bf z}}^T g(A^TA) {\bf w} = \frac{1}{4}\left[(\wt{{\bf z}}+{\bf w})^T  g(A^TA)(\wt{{\bf z}}+{\bf w}) - 
(\wt{{\bf z}} - {\bf w})^T  g(A^TA) (\wt{{\bf z}} - {\bf w})\right] 
\end{equation*}
to reduce the evaluation of the bilinear form of interest to the evaluation of two symmetric bilinear forms. 
For this reason, the theoretical description of the procedure to follow will be carried out only for the case $\wt{{\bf z}} = {\bf w}$.}
\end{remark}

Let $\wt{{\bf z}} = {\bf w}$ be a unit vector (i.e., $\|{\bf w}\|_2 = 1$). 
We can rewrite the quantity \eqref{bilinear} as a Riemann--Stieltjes 
integral by substituting the spectral factorization of $A^TA$:
\begin{equation}\label{stieltjes}
{\bf w}^T g(A^TA) {\bf w} = {\bf w}^TV_r g(\Sigma_r^2)V_r^T {\bf w} = 
\sum_{i=1}^{r} \frac{f(\sigma_i)}{\sigma_i} ({\bf v}_i^T{\bf w})^2 = 
\int_{\sigma_r^2}^{\sigma_1^2}  g(t) \; d\alpha(t),
\end{equation}
where $\alpha(t)$ is a piecewise constant step function with jumps at the positive eigenvalues 
$\{\sigma_i^2\}_{i=1}^r$ of $A^TA$ defined as follows:
$$\alpha(t) = 
\left\{
\begin{array}{ll}
0,                                     & \text{ if } t<\sigma_r^2 \\
\sum_{i=j+1}^r ({\bf v}_i^T{\bf w})^2, & \text{ if } \sigma_{j+1}^2\leq t < \sigma_j^2 \\
\sum_{i=1}^r ({\bf v}_i^T{\bf w})^2,   & \text{ if } t\geq\sigma_r^2.
\end{array}
\right. $$

We use partial Golub--Kahan bidiagonalization \cite{GK65,GVL} of the matrix $A$ 
to find upper and lower bounds for the bilinear form described in~\eqref{stieltjes}. 
After $\ell$ steps, the Golub--Kahan bidiagonalization of the matrix $A$ with initial vector ${\bf w}$ yields the decompositions
\begin{equation}\label{lbd}
AQ_{\ell}=P_{\ell}B_{\ell},\qquad A^TP_{\ell}=Q_{\ell}B_{\ell}^T + \gamma_{\ell} {\bf q}_{\ell} {\bf e}_{\ell}^T,
\end{equation}
where the matrices $Q_{\ell}=[{\bf q}_0,{\bf q}_1,\ldots,{\bf q}_{\ell-1}]\in\R^{n\times\ell}$ 
and $P_{\ell} = [{\bf p}_0,{\bf p}_1,\ldots,{\bf p}_{\ell-1}]\in\R^{m\times\ell}$
have orthonormal columns, the matrix
$$
B_{\ell}=\left(
\begin{array}{cccc}
\omega_1 & \gamma_1 &                  &                 \\
	 &  \ddots  & \ddots 	       &                 \\
         &  	    & \omega_{\ell-1}  & \gamma_{\ell-1} \\
         &          & 	      	       & \omega_{\ell}   \\          
\end{array}
\right)
\in\R^{\ell\times\ell}$$
is upper bidiagonal, and the first column of $Q_{\ell}$ is ${\bf w}$.

\begin{remark}
{\rm
All the $\{\gamma_j\}_{j=1}^{\ell-1}$ and $\{\omega_j\}_{j=1}^\ell$ can be assumed to be nonzero~\cite{GK65}.
With this assumption, the CSVD of the bidiagonal matrix $B_\ell$ coincides with its SVD: 
$$B_\ell = {\pcal U}_\ell\Theta_\ell{\pcal V}_\ell^T,$$ 
where ${\pcal U}_\ell = [\bm{ \upsilon}_1,\bm{ \upsilon}_2,\ldots,\bm{ \upsilon}_\ell]\in\R^{\ell\times\ell}$ and 
${\pcal V}_\ell = [\bm{\nu}_1,\bm{ \nu}_2,\ldots,\bm{\nu}_\ell]\in\R^{\ell\times\ell}$ are orthogonal, and 
${\Theta}_\ell = \diag(\theta_1,\theta_2,\ldots,\theta_\ell)\in\R^{\ell\times\ell}$.
}
\end{remark}

Combining the equations in~\eqref{lbd} leads to
\begin{equation*}
A^TA Q_{\ell}=Q_{\ell} B_{\ell}^T B_{\ell} + \gamma_{\ell}\omega_{\ell} {\bf q}_{\ell}{\bf e}_{\ell}^T,
\end{equation*}
where ${\bf q}_{\ell}$ denotes the Lanczos vector computed at iteration $\ell + 1$ . 
The matrix
\begin{equation*}
T_{\ell}=B_{\ell}^T B_{\ell}
\end{equation*}
is thus symmetric and tridiagonal and coincides (in exact arithmetic)
with the matrix obtained when the Lanczos algorithm  is applied to $A^TA$.

The quadratic form in \eqref{stieltjes} can then be approximated by using 
an $\ell$-point Gauss quadrature rule~\cite{BookGM}:
\begin{equation}\label{eq:Gell}
{\pcal G}_\ell := {\bf e}_1^T g(T_\ell){\bf e}_1 = {\bf e}_1^T(\sqrt{T_\ell})^\dagger f(\sqrt{T_\ell}){\bf e}_1.
\end{equation} 

If the function $f(t)$ is c.m., then
the Gauss rule provides a lower bound for~\eqref{stieltjes}, which can be shown to be strictly increasing with $\ell$.
If the recursion formulas for the Golub--Kahan bidiagonalization break
down, that is, if $\gamma_{\ell} = 0$ at step $\ell$, then the Gauss quadrature rule gives the exact value (see~\cite{GVL}).

The following result can be easily derived from equation~\eqref{eq:Gell}.

\begin{proposition}\label{prop:nodes}
Let $A \in \R^{m \times n}$ and let $B_{\ell} \in \R^{\ell \times \ell}$ be the bidiagonal matrix computed after $\ell$ steps of the Golub--Kahan bidiagonalization algorithm. 
Let $(\theta_i,\bm{\upsilon}_i,\bm{\nu}_i)$ for $i=1,2,\ldots,\ell$ be the singular triplets of 
$B_\ell = {\pcal U}_\ell \Theta_\ell {\pcal V}_\ell^T$. 
Then the nodes of the $\ell$-point Gauss quadrature rule ${\pcal G}_\ell$ are the singular values $\{\theta_i\}_{i=1}^\ell$. 
Furthermore, if ${\bf z}=\wt{{\bf w}}$, the weights of ${\pcal G}_\ell$ are $({\bf e}_1^T{\bm{\upsilon}_i})^2\theta_i^{-1}$ 
for $i=1,2,\ldots,\ell$. 

Similarly, if $\wt{{\bf z}} = {\bf w}$, then the weights of the rule are given by $({\bf e}_1^T{\bm{\nu}_i})^2\theta_i^{-1}$.
\end{proposition}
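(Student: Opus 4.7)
The proof will be essentially a direct unfolding of the definition \eqref{eq:Gell} using the SVD of $B_\ell$, so I do not expect a real obstacle here; the work is in making explicit the correspondence between the two cases in the statement and the two representations \eqref{bilinear} and \eqref{bilinear2}.

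The plan is to start from the second case ($\wt{\mathbf{z}}={\bf w}$), which is exactly the setting built up in the preceding discussion. Because Golub--Kahan bidiagonalization is initialized with ${\bf q}_0={\bf w}$ and produces $T_\ell=B_\ell^T B_\ell$ as the Lanczos tridiagonalization of $A^T A$ with starting vector ${\bf w}$, the $\ell$-point Gauss rule associated with the Riemann--Stieltjes integral in \eqref{stieltjes} is precisely the scalar $\mathcal{G}_\ell$ of \eqref{eq:Gell}. Substituting the SVD $B_\ell=\mathcal{U}_\ell\Theta_\ell\mathcal{V}_\ell^T$ gives $T_\ell=\mathcal{V}_\ell\Theta_\ell^2\mathcal{V}_\ell^T$, so $\sqrt{T_\ell}=\mathcal{V}_\ell\Theta_\ell\mathcal{V}_\ell^T$ and (using that all $\theta_i>0$, which follows from the assumption that none of the $\omega_j$, $\gamma_j$ vanish) $(\sqrt{T_\ell})^\dagger=\mathcal{V}_\ell\Theta_\ell^{-1}\mathcal{V}_\ell^T$. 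Hence
\[
g(T_\ell)=(\sqrt{T_\ell})^\dagger f(\sqrt{T_\ell})=\mathcal{V}_\ell\,\Theta_\ell^{-1}f(\Theta_\ell)\,\mathcal{V}_\ell^T=\sum_{i=1}^\ell \frac{f(\theta_i)}{\theta_i}\,\bm{\nu}_i\bm{\nu}_i^T,
\]
and evaluating against ${\bf e}_1$ on both sides yields
\[
\mathcal{G}_\ell=\sum_{i=1}^\ell ({\bf e}_1^T\bm{\nu}_i)^2\,\theta_i^{-1}\,f(\theta_i),
\]
which exhibits the quadrature nodes $\{\theta_i\}$ and weights $({\bf e}_1^T\bm{\nu}_i)^2\theta_i^{-1}$ claimed in the proposition.

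For the first case ($\mathbf{z}=\wt{\mathbf{w}}$) I would repeat the argument starting from representation \eqref{bilinear2}, i.e.\ the symmetric bilinear form $\wt{\mathbf{w}}^T g(AA^T)\wt{\mathbf{w}}$. Running Golub--Kahan with initial vector $\mathbf{p}_0=\wt{\mathbf{w}}/\|\wt{\mathbf{w}}\|_2$ now produces $B_\ell B_\ell^T$ as the Lanczos tridiagonalization of $AA^T$; the SVD of $B_\ell$ gives $B_\ell B_\ell^T=\mathcal{U}_\ell\Theta_\ell^2\mathcal{U}_\ell^T$, and the same computation with $\mathcal{U}_\ell$ and $\bm{\upsilon}_i$ in place of $\mathcal{V}_\ell$ and $\bm{\nu}_i$ yields the weights $({\bf e}_1^T\bm{\upsilon}_i)^2\theta_i^{-1}$. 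The only thing worth spelling out carefully is the minor housekeeping point that the two representations \eqref{bilinear} and \eqref{bilinear2} of the same bilinear form lead to two genuinely different quadrature rules: one based on the Lanczos tridiagonalization of $A^T A$ seeded with $\mathbf{w}$, the other on that of $AA^T$ seeded with $\wt{\mathbf{w}}/\|\wt{\mathbf{w}}\|_2$. Everything else is a direct consequence of the spectral representations of the relevant functions of $T_\ell$ and of the identity $(\sqrt{T_\ell})^\dagger f(\sqrt{T_\ell})=\sum_i f(\theta_i)\theta_i^{-1}\bm{\nu}_i\bm{\nu}_i^T$ (respectively its analogue with $\bm{\upsilon}_i$).
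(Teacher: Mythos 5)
Your argument is correct and follows essentially the same route as the paper's: identify $\mathcal{G}_\ell={\bf e}_1^Tg(T_\ell){\bf e}_1$ with the Gauss rule for the Riemann--Stieltjes integral in \eqref{stieltjes}, then read off nodes and weights from the eigenpairs of $T_\ell=B_\ell^TB_\ell$, which via the SVD of $B_\ell$ are exactly $(\theta_i^2,\bm{\nu}_i)$ (respectively $(\theta_i^2,\bm{\upsilon}_i)$ for the $AA^T$ case), giving $\mathcal{G}_\ell=\sum_i f(\theta_i)\,({\bf e}_1^T\bm{\nu}_i)^2\theta_i^{-1}$. The only differences are cosmetic: you make the SVD substitution explicit where the paper simply invokes the standard node/weight characterization of the Gauss rule and treats the ${\bf z}=\wt{{\bf w}}$ case as symmetric.
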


To provide an upper bound for~\eqref{stieltjes} when $f$ is c.~m., 
one can use a $(\ell +1)$-point Gauss--Radau quadrature rule 
with a fixed node $\tau = \sigma_1^2$; this 
can be expressed in terms of the entries of the symmetric tridiagonal matrix 
$$
\widehat{T}_{\ell+1} = \left(
\begin{array}{cc}
T_{\ell}                    &   \rho_\ell{\bf e}_\ell    \\
\rho_\ell{\bf e}_\ell^T   &   \widehat{\omega}_{\ell+1} \\
\end{array}
\right)
\in \R^{(\ell+1)\times(\ell+1)}
$$
as $\widehat{{\pcal G}}_{\ell+1}:={\bf e}_1^Tg(\widehat{T}_{\ell+1}){\bf e}_1$, where $g(t) = (\sqrt{t})^{-1}f(\sqrt{t})$. 
The entries of this matrix, except for the last 
diagonal entry, are those of $B_{\ell+1}^TB_{\ell+1}$. 
To compute the last diagonal entry so that $\widehat{T}_{\ell+1}$ 
has $\tau = \sigma_1^2$ among its eigenvalues, we proceeds as follows~\cite{BookGM}. 
First, we compute $\rho_\ell$; then we set $\widehat{\omega}_{\ell+1} = 
\tau + {\bf e}_\ell^T{\bf x}$, where ${\bf x}$ is the solution 
of the tridiagonal linear system $(T_\ell - \tau I){\bf x} = \rho_\ell^2{\bf e}_\ell$.  
The arithmetic mean between the $\ell$-point Gauss rule ${\pcal G}_\ell$ and the 
$(\ell +1)$-point Gauss--Radau rule $\widehat{{\pcal G}}_{\ell+1}$ is then used 
as an approximation of the quadratic form ${\bf w}^Tg(A^TA){\bf w}$.


\subsection{Second approach}\label{ssec:approach2}
In this section we provide a second approach to the approximation of bilinear forms expressed in terms of generalized matrix functions.

The following result shows how to compute the $\ell$-point Gauss quadrature 
rule in terms of the generalized matrix function of the bidiagonal matrix $B_\ell$. 
Two expressions are derived, depending on the starting (unit) vector given as input to the Golub--Kahan algorithm.
Recall that, unless ${\bf z}=A{\bf w}$ or ${\bf w} = A^T{\bf z}$, 
one has to use the polarization identity to estimate the bilinear forms of interest.

\begin{proposition}
Let be $A \in \R^{m \times n}$ and let $B_{\ell} \in \R^{\ell \times \ell}$ be the bidiagonal matrix computed at step $\ell$ of the Golub--Kahan bidiagonalization algorithm. 
Then, the $\ell$-point Gauss quadrature rule ${\pcal G}_\ell$ is given by  
$$
{\pcal G}_\ell = {\bf e}_1^T B_\ell^{\dagger} \gf(B_\ell){\bf e}_1, \quad \text{ if }\ \wt{{\bf z}} = {\bf w},
$$
or 
$$
{\pcal G}_\ell = {\bf e}_1^T \gf(B_\ell)B_\ell^{\dagger} {\bf e}_1, \quad \text{ if }\  {\bf z} = \wt{{\bf w}}.
$$ 
\end{proposition}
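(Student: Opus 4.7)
The plan is to verify the two formulas by directly substituting the SVD of $B_\ell$ and comparing with the expression for $\mathcal{G}_\ell$ obtained from the previous proposition.

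First, I would start from the characterization of $\mathcal{G}_\ell$ derived in the proposition on the nodes and weights: assuming $\widetilde{\mathbf{z}} = \mathbf{w}$, the $\ell$-point Gauss quadrature rule takes the form
\begin{equation*}
\mathcal{G}_\ell = \sum_{i=1}^\ell \frac{f(\theta_i)}{\theta_i}(\mathbf{e}_1^T \bm{\nu}_i)^2,
\end{equation*}
where $(\theta_i, \bm{\upsilon}_i, \bm{\nu}_i)$ are the singular triplets of $B_\ell$. The corresponding formula with $\bm{\upsilon}_i$ in place of $\bm{\nu}_i$ holds when $\mathbf{z} = \widetilde{\mathbf{w}}$.

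Next, I would invoke the remark that, in exact arithmetic, all the $\omega_j, \gamma_j$ produced by Golub--Kahan are nonzero, so $B_\ell$ is nonsingular and its SVD $B_\ell = \mathcal{U}_\ell \Theta_\ell \mathcal{V}_\ell^T$ is also its CSVD. Consequently $f^\diamond(B_\ell) = \mathcal{U}_\ell f(\Theta_\ell) \mathcal{V}_\ell^T$ by Definition~\ref{def:gf}, and $B_\ell^\dagger = \mathcal{V}_\ell \Theta_\ell^{-1} \mathcal{U}_\ell^T$. A one-line computation using orthonormality of $\mathcal{U}_\ell$ then gives
\begin{equation*}
B_\ell^\dagger f^\diamond(B_\ell) = \mathcal{V}_\ell \Theta_\ell^{-1} f(\Theta_\ell) \mathcal{V}_\ell^T,
\end{equation*}
and symmetrically $f^\diamond(B_\ell) B_\ell^\dagger = \mathcal{U}_\ell f(\Theta_\ell) \Theta_\ell^{-1} \mathcal{U}_\ell^T$.

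Finally, I would sandwich these expressions between $\mathbf{e}_1^T$ and $\mathbf{e}_1$, expand the quadratic form in the orthonormal basis of singular vectors, and observe that
\begin{equation*}
\mathbf{e}_1^T B_\ell^\dagger f^\diamond(B_\ell) \mathbf{e}_1 = \sum_{i=1}^\ell \frac{f(\theta_i)}{\theta_i} (\mathbf{e}_1^T \bm{\nu}_i)^2 = \mathcal{G}_\ell,
\end{equation*}
matching exactly the formula from the preceding proposition; the other identity follows identically. There is no real obstacle here — the proof is essentially a bookkeeping exercise, the only subtle point being to justify that $B_\ell$ is full rank so that $B_\ell^\dagger$ is defined as a proper inverse on the range of $\Theta_\ell$, which is guaranteed by the non-breakdown assumption on the bidiagonalization.
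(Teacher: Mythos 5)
Your proposal is correct and follows essentially the same route as the paper: both invoke Proposition~\ref{prop:nodes} for the nodes $\theta_i$ and weights $({\bf e}_1^T\bm{\nu}_i)^2\theta_i^{-1}$ and then identify the resulting sum with ${\bf e}_1^T B_\ell^{\dagger}\gf(B_\ell){\bf e}_1$ via the SVD $B_\ell={\pcal U}_\ell\Theta_\ell{\pcal V}_\ell^T$ (the paper simply runs the algebra in the opposite direction, rewriting the quadrature sum as ${\bf e}_1^T{\pcal V}_\ell\Theta_\ell^{\dagger}f(\Theta_\ell){\pcal V}_\ell^T{\bf e}_1$ and then as ${\bf e}_1^TB_\ell^{\dagger}{\pcal U}_\ell f(\Theta_\ell){\pcal V}_\ell^T{\bf e}_1$). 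Your explicit appeal to the non-breakdown remark to ensure $B_\ell$ has full rank matches the paper's implicit use of the same fact, so nothing is missing.
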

\begin{proof}
Let $B_\ell = {\pcal U}_\ell\Theta_\ell {\pcal V}_\ell^T$ be a singular value decomposition of the matrix $B_\ell$ obtained after $\ell$ steps 
of the Golub--Kahan bidiagonalization algorithm with starting vector $\wt{{\bf z}} = {\bf w}$. 
Then, from Proposition~\ref{prop:nodes}, it follows that
$$
\begin{aligned}
{\pcal G}_\ell &=  \sum_{i = 1}^{\ell} f(\theta_i ) \frac{({\bf e}_1^T \bm{\nu}_i)^2}{\theta_i} 
		  = {\bf e}_1^T\left( \sum_{i = 1}^{\ell} \frac{f(\theta_i)}{\theta_i}\bm{\nu}_i\bm{\nu}_i^T\right){\bf e}_1  \\
   		 &= {\bf e}_1^T {\pcal V}_\ell \Theta_\ell^{\dagger} f(\Theta_\ell) {\pcal V}_\ell^T {\bf e}_1 
		  =  {\bf e}_1^T B_\ell^{\dagger} {\pcal U}_\ell f(\Theta_\ell) {\pcal V}_\ell^T {\bf e}_1 \\  
		 &={\bf e}_1^T B_\ell^{\dagger} \gf(B_\ell){\bf e}_1.
\end{aligned}
$$
The proof of the case when ${\bf z} = \wt{{\bf w}}$ goes along the same lines and it is thus omitted.
\end{proof}

The $(\ell+1)$-point Gauss-Radau quadrature rule $\widehat{\pcal G}_{\ell+1}$ with a fixed node $\sigma_1$ can be expressed in terms of the entries of the bidiagonal matrix 
$$
\widehat{B}_{\ell+1} =\left( 
\begin{array}{cc} 
B_\ell       & \gamma_\ell \mathbf{e}_\ell \\ 
\mathbf{0}^T & \widehat{\omega}_{\ell + 1}
\end{array}
\right) \in \mathbb{R}^{(\ell+1) \times (\ell+1)}
$$
as $\widehat{\pcal G}_{\ell+1} = {\bf e}_1^T \widehat{B}_{\ell+1}^{\dagger}f^{\diamond}(\widehat{B}_{\ell+1}) {\bf e}_1$ 
if $\widetilde{{\bf z}} = {\bf w}$ or as $\widehat{\pcal G}_{\ell+1} = {\bf e}_1^T f^{\diamond}(\widehat{B}_{\ell+1})\widehat{B}_{\ell+1}^{\dagger} {\bf e}_1$ when ${\bf z} = \widetilde{{\bf w}}$.

The entries of $\widehat{B}_{\ell+1}$, except for the last diagonal entry, 
are those of $B_{\ell+1}$. To compute the last diagonal entry, one has to 
ensure that $\sigma_1^2$ is an eigenvalue of 
$\widehat{T}_{\ell+1} = \widehat{B}_{\ell+1}^T\widehat{B}_{\ell+1}$. 
It can be easily shown that 
$$
\widehat{\omega}_{\ell + 1} = \sqrt{\sigma_1^2 + \mathbf{e}_\ell^T\mathbf{x} - \gamma_\ell^2},
$$
where $\mathbf{x}$ is the solution of the tridiagonal linear system 
$(B_\ell^T B_\ell - \sigma_1^2 I) \mathbf{x} = (\omega_\ell \gamma_\ell)^2 \mathbf{e}_\ell$.


\subsection{Third approach}\label{ssec:approach3}
Assume that we have used $\ell = r = \rank(A) $ 
steps of the Golub--Kahan bidiagonalization algorithm 
with starting vector ${\bf w}$ (normalized so as to have unit norm) 
to derive the matrices $P_r$, $B_r$, and $Q_r$ such that $A = P_r B_r Q_r^T$. 
The CSVD of the bidiagonal matrix is $B_r ={\pcal U}_r \Sigma_r {\pcal V}_r^T$, where $\Sigma_r$ is the same diagonal matrix appearing in the CSVD of $A$. 
Since $P_r$ and $Q_r$ have full column rank, we know that $\rank(P_r B_r Q_r^T) = \rank(B_r) = r$, and thus we 
can write
\begin{equation*}
\begin{split}
{\bf z}^T\gf(A){\bf w} &= {\bf z}^T\gf(P_r B_r Q_r^T){\bf w} 
			= {\bf z}^T\gf(P_r {\pcal U}_r \Sigma_r {\pcal V}_r^T Q_r^T){\bf w} \\
                       &= {\bf z}^T(P_r {\pcal U}_r)f(\Sigma_r)(Q_r {\pcal V}_r)^T {\bf w} 
			= \widehat{{\bf z}}^T \gf(B_r) {\bf e}_1,
\end{split}
\end{equation*}
where $\widehat{{\bf z}} = P_r^T {\bf z}$ and $Q_r^T{\bf w} = {\bf e}_1$. 

Assume now that $\ell < r$. 
We can then truncate the bidiagonalization process and approximate $\gf(A){\bf w}$ as 
$$\gf(A){\bf w}\approx P_\ell \gf(B_\ell){\bf e}_1$$
and then obtain the approximation to the bilinear form of interest as  
$${\bf z}^T\gf(A){\bf w} \approx {\bf z}^TP_\ell\gf(B_\ell){\bf e}_1.$$
The quality of the approximation will depend in general on the distribution of the
singular values of $A$ and on the particular choice of $f$.
Generally speaking, if $f(\sigma_i)$ is much larger on the first few singular
values of $A$ than for the remaining ones, then a small number
of steps result in approximations with small relative errors.

\section{The block case}\label{sec:approach_bl} 

In this section we describe two ways to compute approximations of quantities of 
the form~\eqref{gf_bilinear}, when $k>1$. It is known that for this kind of problem,
block algorithms are  generally more efficient than the separate computation of each individual
entry (or column) of $Z^T\gf(A)W$.

When dealing with blocks $Z$ and $W$ with a number of columns $k > 1$, 
the complete monotonicity of the function $f$ does not ensure that block 
Gauss-type quadrature rules provide bounds on the quantities of the form $Z^T\gf(A)W$. 
In this case, indeed, no information about the sign of the quadrature error can 
be obtained from the remainder formula for the Gauss quadrature rules~\cite{BookGM}. 
Therefore, we focus on the computation of approximations for the quantities of 
interest, rather than on bounds. 
We propose two different approaches to compute the quantities~\eqref{gf_bilinear}. 
The first one exploits the connection between generalized matrix functions and standard 
matrix functions described in 
Theorem~\ref{thm:genfun}, while the second one first approximates the action 
of a generalized matrix function on $k$ vectors and then derives the approximation of the quantities of interest.

\subsection{First approach}\label{ssec:block1}
As a first approach, we propose the use of a pair of block Gauss and anti-Gauss quadrature 
rules~\cite{La,CRS,FMRR} based on the nonsymmetric block Lanczos algorithm~\cite{BookGM}.  
As already pointed out, if we let $g(t) = (\sqrt{t})^{-1}f(\sqrt{t})$, it holds that
$$
Z^T f^{\diamond}(A) W = \widetilde{Z}^T g(A^TA)W = Z^T g(AA^T)\widetilde{W},
$$ 
where $\widetilde{Z} = A^TZ$ and $\widetilde{W} = AW$. 
In this case, there is no equivalent to the polarization identity and thus we 
work directly with the blocks $\widetilde{Z}$ and $W$, $\widetilde{Z} \neq W$ 
(the case when $Z$ and $\widetilde{W}$ are the initial blocks is similar).
 
Let $\widetilde{Z}_0\Delta_0^T \in\R^{n \times k}$ and $W_0\Gamma_0^T\in\R^{m\times k}$ 
have all zero entries. 
Assume moreover that $\widetilde{Z}_1 = \widetilde{Z}$ and $W_1 = W$ satisfy 
$\widetilde{Z}_1^TW_1 = I_k$.
Then the nonsymmetric block Lanczos algorithm
 applied to the matrix $X = A^TA$ is described by the following recursions:
\begin{equation}\label{xrecur2}
\begin{aligned}
&\Omega_j=W_{j}^T\left(X\widetilde{Z}_j-\widetilde{Z}_{j-1}\Delta_{j-1}^T\right), \\
&R_j=X\widetilde{Z}_j-\widetilde{Z}_j\Omega_j-\widetilde{Z}_{j-1}\Delta_{j-1}^T, 
\qquad &Q_RR_R&=R_j, \\
&S_j=X^TW_j-W_j\Omega_j^T-W_{j-1}\Gamma_{j-1}^T, \qquad &Q_SR_S&=S_j,\\
&Q_S^TQ_R=\widetilde{U}\widetilde{\Sigma} \widetilde{V}^T, \\
&\widetilde{Z}_{j+1}=Q_R\widetilde{V}\widetilde{\Sigma}^{-1/2}, 
\quad W_{j+1}=Q_S\widetilde{U}\widetilde{\Sigma}^{-1/2}, \\
&\Gamma_j=\widetilde{\Sigma}^{1/2}\widetilde{V}^TR_R, \quad 
\Delta_j=\widetilde{\Sigma}^{1/2}\widetilde{W}^TR_S,
\end{aligned}
\end{equation}
$j = 1, \dots, \ell$. In~\eqref{xrecur2}, $Q_RR_R=R_j$ and $Q_SR_S=S_j$ are the QR factorizations of $R_j$ and $S_j$, respectively, and $\widetilde{U}\widetilde{\Sigma} \widetilde{V}^T$ is a singular value
decomposition of the matrix $Q_S^TQ_R$. The recursion formulas~\eqref{xrecur2} ensures that $\widetilde{Z}_j^TW_j = \delta_{ij}I_k$.

More succinctly, after $\ell$ steps, the nonsymmetric block Lanczos algorithm applied to the 
matrix $X = A^TA$ with initial blocks $\widetilde{Z}_1$ and $W_1$ yields the decompositions
\begin{equation*}
\begin{aligned}
X\left[\widetilde{Z}_1,\dots,\widetilde{Z}_\ell\right]
				&=\left[\widetilde{Z}_1,\dots,\widetilde{Z}_\ell\right]J_\ell+\widetilde{Z}_{\ell+1}\Gamma_\ell 					\bm{E}_\ell^T,\\
X\left[W_1,\dots,W_\ell\right]	&=\left[W_1,\dots,W_\ell\right]J_\ell^T+W_{\ell+1}\Delta_\ell \bm{E}_\ell^T,
\end{aligned}
\end{equation*}
where $J_\ell$ is the matrix 
\begin{equation}\label{JL}
J_\ell=\left(
\begin{array}{ccccc}
\Omega_1&\Delta_1^T&&&\\
\Gamma_1&\Omega_2&\Delta_2^T&&\\

&\ddots&\ddots&\ddots&\\
&&\Gamma_{\ell-2}&\Omega_{\ell-1}&\Delta_{\ell-1}^T\\
&&&\Gamma_{\ell-1}&\Omega_\ell

\end{array}
\right)\in\R^{k\ell\times k\ell},
\end{equation}
and $\bm{E}_i$, for $i=1,2,\ldots,\ell$ are $k\times (k\ell)$ block matrices 
which contain $k\times k$ zero blocks everywhere, except for the $i$th block, 
which coincides with the identity matrix $I_k$.
We remark that if $\widetilde{Z} = W$, the use of the symmetric block 
Lanczos algorithm is preferable.  In this case, the matrix $J_\ell$~\eqref{JL} is symmetric and the decompositions~\eqref{xrecur2} can be written as 
$$
X\left[W_1,\dots,W_\ell\right]=\left[W_1,\dots,W_\ell\right]J_\ell+W_{\ell+1}\Gamma_\ell \bm{E}_\ell^T.
$$
The $\ell$-block nonsymmetric Gauss
quadrature rule ${\pcal G}_\ell$ can then be expressed as
\begin{equation*}
{\pcal G}_\ell = \bm{E}_1^Tg(J_\ell) \bm{E}_1.
\end{equation*}

The $(\ell+1)$-block anti-Gauss quadrature rule ${\pcal H}_{\ell+1}$ is defined as the $(\ell+1)$-block quadrature rule such that
\begin{equation*}
\left({\pcal I}-{\pcal H}_{\ell+1}\right)p=-\left({\pcal I}-{\pcal G}_\ell\right)p,
\qquad p\in\mathbb{P}^{2\ell+1},
\end{equation*}
where 
${\pcal I}p:= \sum_{i=1}^r p(\sigma_i^2)\bm{\alpha}_i\bm{\beta}_i^T + \sum_{i=r+1}^n p(0)
\bm{\alpha}_i\bm{\beta}_i^T
= (\wt{Z}^TV)p(\Sigma^T\Sigma)(W^T V)^T$, with
$\wt{Z}^TV = [\bm{\alpha}_1,\bm{\alpha}_2,\ldots,\bm{\alpha}_n]$,
$W^T V = [\bm{\beta}_1,\bm{\beta}_2,\ldots,\bm{\beta}_n]\in\mathbb{R}^{k\times n}$, 
and $\mathbb{P}^{2\ell+1}$ is the set of polynomials of degree at most $2\ell+1$
(see~\cite{BookGM}).
As shown in \cite{FMRR}, the
$(\ell+1)$-block nonsymmetric anti-Gauss rule can be computed in terms of 
the matrix $\widetilde{J}_{\ell+1}$ as
\begin{equation*}
{\pcal H}_{\ell+1}= \bm{E}_1^Tg(\widetilde{J}_{\ell+1}) \bm{E}_1,
\end{equation*}
where 
$$
\widetilde{J}_{\ell+1}=\left(\begin{array}{ccccc}
\Omega_1&\Delta_1^T&&&\\
\Gamma_1&\Omega_2&\Delta_2^T&&\\
&\ddots&\ddots&\ddots&\\
&&\Gamma_{\ell-1}&\Omega_{\ell}&\sqrt{2}\Delta_{\ell}^T\\
&&&\sqrt{2}\Gamma_{\ell}&\Omega_{\ell+1}
\end{array}\right)\in\R^{k(\ell+1)\times k(\ell+1)}.
$$

A pair of block Gauss and anti-Gauss quadrature rules is not guaranteed 
to provide upper and lower bounds, not even in the case $k = 1$. 
However, suppose that the function $g$ can be written as
$$
g(t) = \sum_{i=1}^\ell \eta_i p_i(t),
$$
where $p_i(t)$ are the orthonormal polynomials implicitly defined by the scalar Lanczos algorithm.
In~\cite{CRS}, the authors show that if the coefficients $\eta_i$ decay rapidly to zero, then 
$$
\left({\pcal I}-{\pcal H}_{\ell+1}\right)g \approx -\left({\pcal I}-{\pcal G}_\ell\right)g,
$$
that is, a pair of scalar Gauss and anti-Gauss rules provides {\it estimates} of
upper and lower bounds on the bilinear form of interest. 
This result has been extended to the block case in~\cite{FMRR}. 
In this framework, if we express $g(t)$ in terms of orthonormal polynomials, the coefficients in the expansion are $k \times k$ matrices. 
To obtain good entrywise approximations for the quantities of interest it is necessary that the norm of the coefficients decays rapidly as $\ell$ increases. 
This condition is satisfied if $g(t)$ is analytic in a 
simply connected domain $D$ enclosing the spectrum of $A^TA$, 
as long as the boundary $\partial D$ is not close to the spectrum \cite{FMRR}.

If the function $g(t)$ satisfies the above conditions, the arithmetic mean 
\begin{equation}\label{FL}
F_\ell = \frac{1}{2}\left({\pcal G}_\ell + {\pcal H}_{\ell+1}\right)
\end{equation} 
between Gauss and anti-Gauss quadrature rules can be used as an approximation of the 
matrix-valued expression $Z^T f^{\diamond}(A) W$.


\subsection{Second approach}\label{ssec:block2}
The second approach extends to the block case the approach described in subsection~\ref{ssec:approach3}. 
Assume that the initial block $W \in \mathbb{R}^{n \times k}$ satisfies $W^TW = I_k$ and that the matrices 
$\Gamma_0 \in \mathbb{R}^{k \times k}$ and $P_0 \in \mathbb{R}^{m \times k}$ are zero matrices. 
The following recursions determine the first $\ell$ steps of the block Golub--Kahan algorithm with starting block $Q_1 = W$:
\begin{equation}\label{recur}
\begin{aligned}
&R_j=AQ_j-P_{j-1}\Gamma_{j-1}^T, \\
&P_j\Omega_j=R_j,\\
&S_j =A^TP_j-Q_j\Omega_j^T, \\
&Q_{j+1}\Gamma_j=S_j, \\
\end{aligned}
\qquad j=1, \dots, \ell,
\end{equation}
where $P_j\Omega_j = R_j$ and $Q_{j+1}\Gamma_j = S_j$ are QR factorizations of $R_j$ and $S_j$, respectively.

After $\ell$ steps, the recursions~\eqref{recur} yield the decompositions
\begin{equation*}
\begin{aligned}
A\left[Q_1,\dots,Q_\ell\right]&=\left[P_1,\dots,P_\ell\right]B_\ell,
\\
A^T\left[P_1,\dots,P_\ell\right]&=\left[Q_1,\dots,Q_\ell\right]B_\ell^T+Q_{\ell+1}\Gamma_\ell  \bm{E}_\ell^T,
\end{aligned}
\end{equation*}
where now  
$$
B_\ell=\left(\begin{array}{ccccc}
\Omega_1&\Gamma_1^T&&&\\
        &\Omega_2&\Gamma_2^T&&\\
        &        &\ddots&\ddots&\\
        &        &      &\Omega_{\ell-1}&\Gamma_{\ell-1}^T\\
&&&&\Omega_\ell
\end{array}\right)\in\R^{k\ell\times k\ell}.
$$
Following the same reasoning as in subsection~\ref{ssec:approach3}, when $k\ell < r = \rank{(A)}$, 
we can approximate the quantities of interest as
$$ Z^T f^{\diamond}(A) W \approx Z^T [P_1, \dots, P_\ell] f^{\diamond}(B_\ell)  \bm{E}_1 = F_\ell.
$$

\section{Numerical results}

In this section we present some numerical results concerning the application 
of the previously introduced techniques 
to the computation of centrality and communicability indices in directed networks.
The first set of experiments concerns the computation of the 
{\em total hub communicability} of nodes, 
which, for a node $i$, is defined as the following bilinear form:
\begin{equation}\label{eq:tc}
C_h (i) := [\sinh^\diamond(A){\bf 1}]_i = {\bf e}_i^T \sinh^\diamond(A) {\bf 1}\,,
\end{equation}
where $A$ is the adjacency matrix of the digraph.
As shown in \cite{AB15}, this quantity can be used to rank how important node $i$
is when regarded as a ``hub", i.e., as a
broadcaster of information (analogous quantities rank the nodes
in order of their importance as ``authorities", i.e.,  receivers of information).
The second set of experiments concerns the computation of the resolvent-based 
communicability~\cite{BEK13} 
between node $i$, playing the 
role of broadcaster of information, and node $j$, acting as a receiver. 
The quantities of interest here have the form $[\gh(A)]_{ij}$, where $h(t)=\alpha t(1-(\alpha t)^2)^{-1}$ and $\alpha\in(0,\sigma_1^{-1})$. 
In all the tests we apply the approaches previously described and we use as stopping criterion 
\begin{equation}\label{eq:sc}
{\pcal R}_\ell = \left|\frac{x^{(\ell+1)}-x^{(\ell)}}{x^{(\ell)}}\right|\leq{\rm tol},
\end{equation}
where ${\rm tol}$ is a fixed tolerance and $x^{(\ell)}$ represents the approximation to the bilinear form of interest computed 
at step $\ell$ by the method under study. 

Our dataset contains the adjacency matrices associated with three real world unweighted and directed networks: \textsf{Roget}, \textsf{SLASHDOT}, 
and \textsf{ITwiki}~\cite{DataP,DataF,DataM}. 
The adjacency matrix associated with \textsf{Roget} is $994\times 994$ and has 7281 nonzeros.
The graph contains information concerning the cross-references in Roget's Thesaurus.
The adjacency matrix associated with \textsf{SLASHDOT} is an $82168\times 82168$ matrix with $948464$ nonzeros. 
For this network, there is a connection from node $i$ to node $j$  if user $i$ indicated user $j$ as a friend or a foe. 
The last network used in the tests, \textsf{ITwiki}, represents the Italian Wikipedia. 
Its adjacency matrix is $49728\times 49728$ and has $941425$ nonzeros, 
and there is a link from node $i$ to node $j$ in the graph if page $i$ refers to page $j$. 

\subsection*{Node centralities}
In this section we want to investigate how the three approaches defined for the 
case of $k=1$ perform when we want to approximate (\ref{eq:tc}), the total communicability 
of nodes in the network. 
For each network in the dataset, we computed the centralities of ten nodes chosen 
uniformly at random 
among all the nodes in the graph. 


The results for the tests are presented in Tables~\ref{tab:Roget}-\ref{tab:itwiki2}. 
The tolerance used in the stopping criterion \eqref{eq:sc} is set to ${\rm tol}=10^{-6}$.
The tables display the number of iterations required to satisfy the above criterion 
and the relative error of the 
computed solution with respect to the ``exact" value of the bilinear form. The
latter has been computed
using the full SVD for the smallest network, and using a partial SVD with a sufficiently 
large number of terms $(\gg \ell)$ for the two larger ones. The relative error is
denoted by
$${\pcal E}_\ell = \frac{|x^{(\ell)}- {\bf z}^T\gh(A){\bf w}|}{|{\bf z}^T\gh(A){\bf w}|}.$$ 

Concerning the first approach, since $g(t) = (\sqrt{t})^{-1}\sinh(\sqrt{t})$ is not 
completely monotonic, we have used the Gauss quadrature rule as an approximation for the quantities of interest, rather 
than as a lower bound.

\begin{table}[h!]
\footnotesize
\centering
\caption{{\rm Network: \textsf{Roget}, $h(t) = \sinh(t)$ (${\rm tol} = 10^{-6}$).}}
\label{tab:Roget}
\begin{tabular}{c|cccccc}
\hline
                     	      &\multicolumn{2}{c}{First approach} &\multicolumn{2}{c}{Second approach}  &\multicolumn{2}{c}{Third approach}\\
                              & ITER	& ${\pcal E}_\ell$	  & ITER	 & ${\pcal E}_\ell$	& ITER & ${\pcal E}_\ell$    \\

\hline

1                             & 8  &  1.10e-06  	      & 8  &  1.10e-06  		& 9   &  9.45e-09 \\
2                             & 34 &  9.93e-08  	      & 34 &  9.74e-08  		& 10  &  2.78e-09 \\
3                             & 5  &  3.20e-05  	      & 5  &  3.20e-05  		& 8   &  5.26e-07 \\
4                             & 6  &  4.38e-06  	      & 6  &  4.38e-06  		& 9   &  1.21e-08 \\
5                             & 20 &  6.18e-06  	      & 20 &  6.18e-06  		& 9   &  1.21e-08 \\
6                             & 7  &  2.62e-06  	      & 7  &  2.62e-06  		& 10  &  3.68e-10 \\
7                             & 8  &  7.08e-06  	      & 8  &  7.08e-06  		& 9   &  1.99e-08 \\
8                             & 15 &  9.07e-07  	      & 15 &  9.07e-07  		& 9   &  2.80e-08 \\
9                             & 9  &  8.15e-08  	      & 9  &  8.15e-08  		& 9   &  1.72e-09 \\
10                            & 7  &  3.78e-07  	      & 7  &  3.78e-07  		& 9   &  2.64e-08 \\

\hline

\end{tabular}
\end{table}
\begin{table} [h!]
\footnotesize
\centering
\caption{{\rm Network: \textsf{SLASHDOT}, $h(t) = \sinh(t)$ (${\rm tol} = 10^{-6}$).}}
\label{tab:slashdot2}
\begin{tabular}{c|cccccc}
\hline
                       & \multicolumn{2}{c}{First approach}  & \multicolumn{2}{c}{Second approach} & \multicolumn{2}{c}{Third approach} \\
                       & ITER & ${\pcal E}_\ell$	     & ITER & ${\pcal E}_\ell$ 	    & ITER & ${\pcal E}_\ell$   \\
\hline

1                      & 6    &  4.31e-07		 & 6	&  5.61e-07		   & 9    & 2.45e-08 \\
2                      & 9    &  3.24e-05		 & 15	&  2.26e-06		   & 9    & 1.56e-08 \\
3                      & 7    &  1.24e-06		 & 8	&  1.75e-06		   & 9    & 1.04e-07 \\
4                      & 14   &  2.21e-04		 & 8	&  2.12e-04		   & 10   & 1.74e-08 \\
5                      & 7    &  2.24e-05		 & 7	&  2.35e-05		   & 10   & 5.16e-09 \\
6                      & 10   &  4.84e-04		 & 19	&  3.72e-04		   & 10   & 1.99e-08 \\
7                      & 7    &  1.20e-06		 & 7	&  1.20e-06		   & 9    & 6.47e-08 \\
8                      & 7    &  7.11e-07		 & 7	&  7.66e-07		   & 9    & 7.68e-09 \\
9                      & 7    &  5.53e-06		 & 7	&  5.98e-06		   & 9    & 1.32e-09 \\
10                     & 6    &  6.98e-07		 & 6	&  4.92e-07		   & 8    & 8.68e-09 \\

 \hline
\end{tabular}
\end{table}
\begin{table} [h!]
\footnotesize
\centering
\caption{{\rm Network: \textsf{ITwiki}, $h(t) = \sinh(t)$ (${\rm tol} = 10^{-6}$).}}
\label{tab:itwiki2}
\begin{tabular}{c|cccccc}
\hline
                     	      &\multicolumn{2}{c}{First approach} &\multicolumn{2}{c}{Second approach}  &\multicolumn{2}{c}{Third approach}\\
                              & ITER	& ${\pcal E}_\ell$	  & ITER	 & ${\pcal E}_\ell$	& ITER & ${\pcal E}_\ell$    \\

\hline
1                             & 5  &  3.88e-08  	      & 5  &  2.90e-08  		& 6  &  8.02e-09 \\
2                             & 10 &  4.72e-05  	      & 9  &  4.68e-05  		& 7  &  1.27e-08 \\
3                             & 5  &  3.20e-08  	      & 5  &  3.17e-08  		& 6  &  7.01e-09 \\
4                             & 7  &  2.31e-05  	      & 9  &  2.33e-05  		& 8  &  4.31e-09 \\
5                             & 8  &  4.20e-05  	      & 20 &  5.77e-05  		& 8  &  5.91e-09 \\
6                             & 9  &  2.19e-04  	      & 24 &  2.13e-04  		& 8  &  2.70e-08 \\
7                             & 6  &  4.26e-07  	      & 6  &  5.85e-07  		& 7  &  3.15e-09 \\
8                             & 14 &  1.91e-04  	      & 29 &  2.24e-04  		& 8  &  3.38e-09 \\
9                             & 5  &  8.57e-08  	      & 5  &  9.31e-08  		& 6  &  5.07e-09 \\
10                            & 9  &  9.36e-06  	      & 8  &  1.12e-05  		& 8  &  3.22e-10 \\ 
 \hline
\end{tabular}
\end{table}
As one can see from the tables, only a small number of steps is 
required for all the three approaches. 
The third approach appears to be the best one for computing these quantities since it requires 
almost always the same 
number of steps for all the nodes in each network in the dataset, while attaining
higher accuracy. 
Somewhat inferior results (in terms of both the number of iterations performed and the accuracy of the computed solution)  
are obtained with the other two approaches, which however return 

\noindent very good results as well. 
This can be explained
by observing that the function $\sinh(t)$ 
being applied to the larger (approximate) singular values of $A$ takes
much larger values 
than the function $t^{-1}\sinh(t)$ used by the other two approaches,
therefore a small relative error can be attained in fewer steps (since the
largest singular values are the first to converge).

\subsection*{Resolvent-based communicability between nodes}
Our second set of numerical experiments concerns the computation of the 
resolvent-based
communicability between two nodes $i$ and $j$. 
The concerned function is now $h(t) = \frac{\alpha t}{1-(\alpha t)^2}$, where 
$\alpha\in(0,\sigma_1^{-1})$ 
is a user-defined parameter. 
The generalized matrix function $\gh(A)$ arises as the top right square block of 
the matrix resolvent $(I-\alpha\mathscr{A})^{-1}$, where the matrix 
$\mathscr{A}$ is defined as in~\eqref{eq:calA}. 
This resolvent function is similar to one first used by Katz to assign centrality indices to nodes
in a network, see \cite{Katz}.
In~\cite{BEK13} the authors showed that when $\mathscr{A}$ is as in \eqref{eq:calA},
the resolvent can be written as
$$ 
(I-\alpha\mathscr{A})^{-1} = 
\left(
\begin{array}{cc}
(I-\alpha^2 AA^T)^{-1} & \gh(A) \\
\gh(A^T) & (I-\alpha^2 A^TA)^{-1}
\end{array}
\right), \qquad \alpha\in(0,\sigma_1^{-1}).
$$ 
Furthermore, the entries of its top right block can be used 
to account for the communicability between node $i$ 
(playing the role of spreader of information, or hub) and node $j$ 
(playing the role of receiver, or authority). 
As before, the function $g(t) = (\sqrt{t})^{-1}h(\sqrt{t})$ is not completely monotonic. 
Thus the Gauss rule can only be expected to provide an approximation to the quantity of interest.

We have performed three different tests on the network \textsf{Roget} for three different values of $\alpha$. 
More in detail, we have tested $\alpha = \frac{1}{8\sigma_1},\frac{1}{2\sigma_1}$, and
$\frac{17}{20\sigma_1} = \frac{0.85}{\sigma_1}$. 
Figure~\ref{fig:sv_gKatz_fg} shows the values of the diagonal entries 
of $h(\Sigma_r)$ and $\Sigma_r^\dagger h(\Sigma_r)$ for the three different values of $\alpha$ used in the tests.  
Figure~\ref{fig:sv_alpha_fg} plots the respective
behavior of the diagonal entries of $\Sigma_r^\dagger h(\Sigma_r)$ and 
$h(\Sigma_r)$ for the three values of the parameter $\alpha$.  
From these plots, one can expect the first and second approach to require a higher number of steps than that required by the third one; 
this is because the leading singular values are mapped to appreciably larger values when
 applying the function $h(t)$ than when applying the function $t^{-1}h(t)$. 
The results for this set of experiments are contained in 
Tables~\ref{tab:Roget11}--\ref{tab:Roget33}, 
when the tolerance for the stopping criterion~\eqref{eq:sc} is set to 
${\rm tol} = 10^{-4}$. 
The pairs of nodes whose communicability we want to approximate are chosen uniformly at random among all the possible pairs of distinct nodes in the graph. 
We kept the same set of pairs in all the three experiments. 
We want to point out, however, that the value being computed for each pair varies with $\alpha$, and thus the results (in terms of number of iterations and accuracy) 
cannot be compared among the three tables. 

As can be clearly seen from the tables, the fastest and most accurate method 
(in terms of relative error with respect to the exact value) 
is once again the third. 
Indeed, it requires fewer steps than the first two approaches and it achieves a higher level of accuracy 
(see, e.g., Table~\ref{tab:Roget22}). 
In fact, in some cases the first two approaches stabilize at a value which is far from the quantity that needs to be computed; this kind of stagnation leads to the termination criterion to be
satisfied even if convergence has not been attained.
Moreover, in \tref{tab:Roget11}, case $4$ requires $\ell = {\rm rank}(A) = 992$ steps to satisfy \eqref{eq:sc} for the first two approaches, 
whereas the third only requires $5$ steps.


\begin{figure}
\centering
\includegraphics[width=\textwidth]{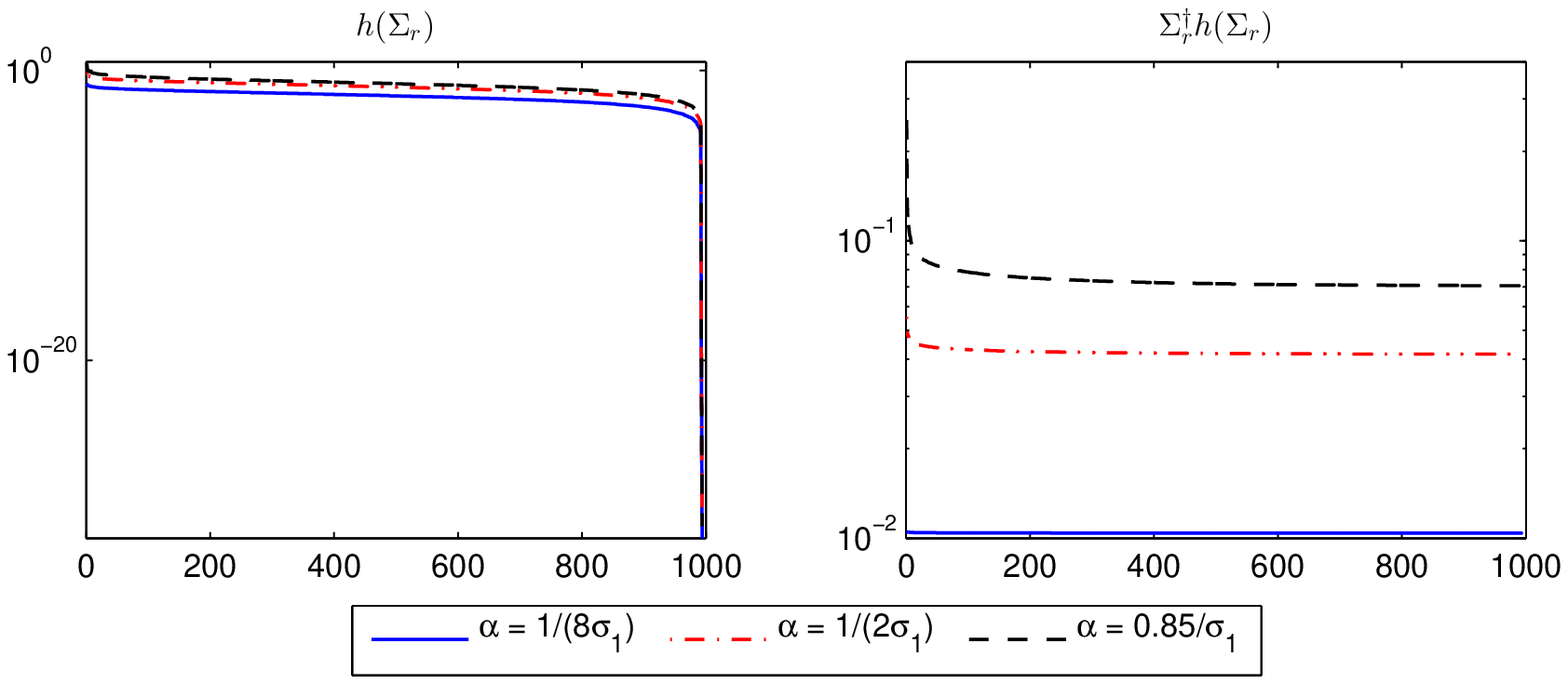}
\caption{{\rm Network: \textsf{Roget}. Diagonal entries of $h(\Sigma_r)$ 
and $\Sigma_r^\dagger h(\Sigma_r)$ for $h(t) = \frac{\alpha t}{1-(\alpha t)^2}$, 
when $\alpha = \frac{1}{8\sigma_1},\frac{1}{2\sigma_1},0.85\ \sigma_1^{-1}$}}
\label{fig:sv_gKatz_fg}
\end{figure}

\begin{figure}
\centering
\includegraphics[width=\textwidth]{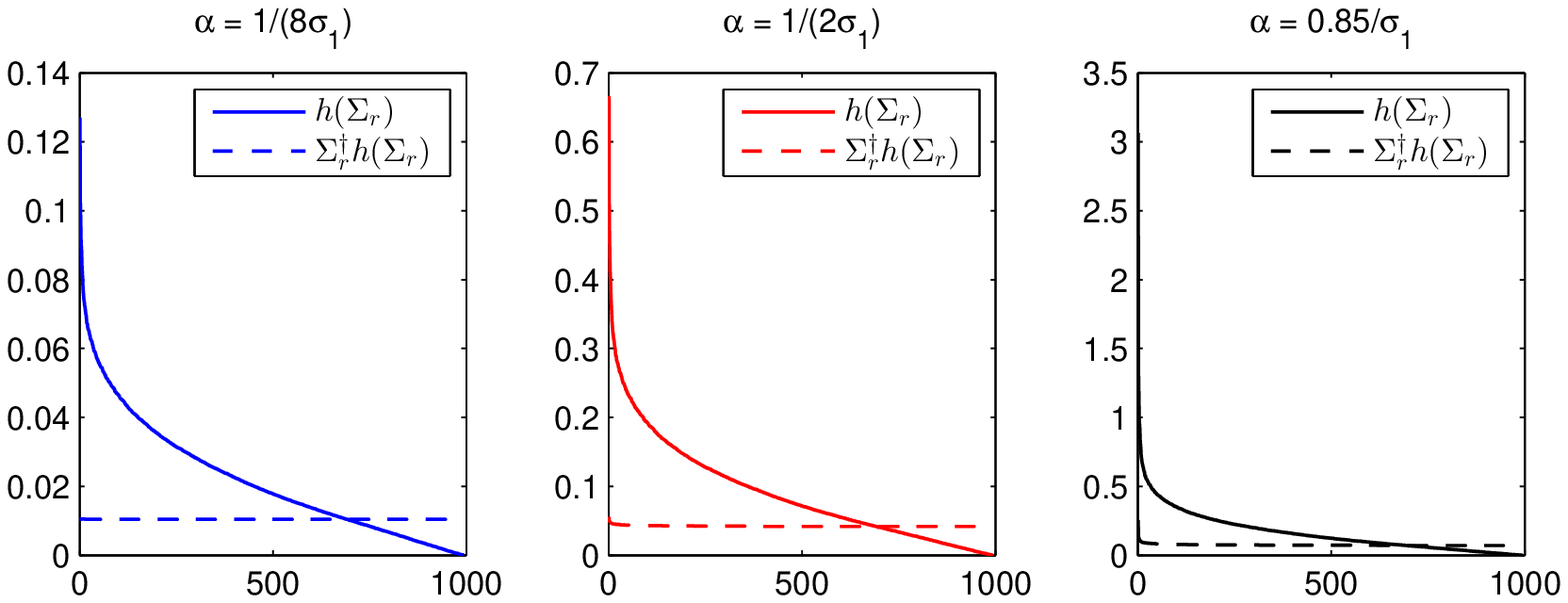}
\caption{{\rm Network: \textsf{Roget}. Diagonal entries of $h(\Sigma_r)$ 
and $\Sigma_r^\dagger h(\Sigma_r)$ for $h(t) = \frac{\alpha t}{1-(\alpha t)^2}$, 
when $\alpha = \frac{1}{8\sigma_1},\frac{1}{2\sigma_1},0.85\ \sigma_1^{-1}$}}
\label{fig:sv_alpha_fg}
\end{figure}


\begin{table}
\footnotesize
\centering
\caption{{\rm Network: \textsf{Roget}, $h(t) = \frac{\alpha t}{1-(\alpha t)^2}$, $\alpha =\frac{1}{8\sigma_1}$ (${\rm tol} = 10^{-4}$).}}
\label{tab:Roget11}
\begin{tabular}{c|cccccc}
\hline
                     	      &\multicolumn{2}{c}{First approach} &\multicolumn{2}{c}{Second approach}  &\multicolumn{2}{c}{Third approach}\\
                              & ITER	& ${\pcal E}_\ell$	  & ITER	 & ${\pcal E}_\ell$	& ITER & ${\pcal E}_\ell$    \\

\hline

1                            & 75  &  2.14e+03  & 75  &  2.14e+03 & 5  &  6.61e-08 \\
2                            & 106 &  1.60e-02  & 106 &  1.60e-02 & 4  &  3.75e-08 \\
3                            & 906 &  2.99e-01  & 906 &  2.99e-01 & 5  &  1.65e-08 \\
4                            & 992 &  2.17e-08  & 992 &  7.82e-06 & 5  &  1.33e-07 \\
5                            & 166 &  7.16e+02  & 166 &  7.16e+02 & 5  &  7.52e-08 \\
6                            & 257 &  1.03e-01  & 257 &  1.03e-01 & 4  &  4.46e-08 \\
7                            & 874 &  5.41e-01  & 874 &  5.41e-01 & 5  &  2.46e-08 \\
8                            & 274 &  1.06e+00  & 274 &  1.06e+00 & 5  &  9.46e-08 \\
9                            & 259 &  8.37e-04  & 259 &  8.37e-04 & 5  &  7.31e-11 \\
10			     & 733 &  2.48e+01  & 733 &  2.48e+01 & 5  &  3.37e-07 \\

\hline

\end{tabular}
\end{table}

\begin{table}
\footnotesize
\centering
\caption{{\rm Network: \textsf{Roget}, $h(t) = \frac{\alpha t}{1-(\alpha t)^2}$, $\alpha = \frac{1}{2\sigma_1}$ (${\rm tol} = 10^{-4}$).}}
\label{tab:Roget22}
\begin{tabular}{c|cccccc}
\hline
                     	      &\multicolumn{2}{c}{First approach} &\multicolumn{2}{c}{Second approach}  &\multicolumn{2}{c}{Third approach}\\
                              & ITER	& ${\pcal E}_\ell$	  & ITER	 & ${\pcal E}_\ell$	& ITER & ${\pcal E}_\ell$    \\

\hline
1                            & 75  &  6.32e+00 & 75  &  6.32e+00 & 7  &  1.90e-06 \\
2                            & 87  &  8.90e-03 & 87  &  8.90e-03 & 6  &  6.02e-07 \\
3                            & 308 &  1.10e-02 & 308 &  1.10e-02 & 6  &  7.96e-06 \\
4                            & 106 &  5.71e-01 & 106 &  5.72e-01 & 7  &  7.77e-08 \\
5                            & 495 &  1.72e-01 & 495 &  1.72e-01 & 7  &  4.43e-06 \\
6                            & 79  &  4.51e-02 & 79  &  4.51e-02 & 6  &  2.12e-06 \\
7                            & 118 &  8.64e-02 & 118 &  8.64e-02 & 7  &  4.24e-07 \\
8                            & 121 &  1.00e-01 & 121 &  1.01e-01 & 7  &  5.84e-07 \\
9                            & 59  &  1.91e-02 & 59  &  1.91e-02 & 6  &  6.99e-07 \\
10			     & 574 &  1.87e-01 & 574 &  1.87e-01 & 7  &  1.49e-06 \\

\hline

\end{tabular}
\end{table}

\begin{table}
\footnotesize
\centering
\caption{{\rm Network: \textsf{Roget}, $h(t) = \frac{\alpha t}{1-(\alpha t)^2}$, $\alpha = 0.85\ \sigma_1^{-1}$ (${\rm tol} = 10^{-4}$).}}
\label{tab:Roget33}
\begin{tabular}{c|cccccc}
\hline
                     	      &\multicolumn{2}{c}{First approach} &\multicolumn{2}{c}{Second approach}  &\multicolumn{2}{c}{Third approach}\\
                              & ITER	& ${\pcal E}_\ell$	  & ITER	 & ${\pcal E}_\ell$	& ITER & ${\pcal E}_\ell$    \\

\hline
1                            & 74  &  2.72e-01  & 74  &  2.72e-01 & 10  &  4.88e-06 \\
2                            & 66  &  3.45e-03  & 66  &  3.45e-03 & 8	&  5.22e-06 \\
3                            & 97  &  6.96e-02  & 97  &  6.96e-02 & 8	&  4.79e-06 \\
4                            & 58  &  3.72e-02  & 58  &  3.72e-02 & 8	&  8.04e-05 \\
5                            & 147 &  6.23e-02  & 147 &  6.23e-02 & 9	&  1.23e-05 \\
6                            & 53  &  8.48e-03  & 53  &  8.48e-03 & 9	&  3.34e-06 \\
7                            & 74  &  1.58e-02  & 74  &  1.58e-02 & 7	&  3.20e-04 \\
8                            & 117 &  6.49e-03  & 117 &  6.49e-03 & 10  &  4.52e-06 \\
9                            & 23  &  6.02e-03  & 23  &  6.02e-03 & 9	&  4.34e-07 \\
10			     & 152 &  1.70e-01  & 152 &  1.70e-01 & 9	&  3.90e-05 \\
\hline

\end{tabular}
\end{table}

\subsection*{Comparison with standard Lanczos-based approach}

In the case of generalized matrix functions like $\sinh^\diamond (A)$, which
occur as submatrices of ``standard" matrix functions 
applied to the symmetric matrix $\mathscr{A}$, it is natural to 
compare the previously proposed approaches with the use of Gauss quadrature-based
bounds and estimates based on the Lanczos process. This was the approach used, for
example, in \cite{BEK13}. Henceforth, we refer to this approach as ``the {\tt mmq} approach," 
since it is implemented on the basis of the ${\tt mmq}$ toolkit \cite{mmq} originally developed by 
G\'erard Meurant; see also \cite{BookGM}.

Numerical experiments, not shown here, indicate that on average, the 
{\tt mmq} approach requires a slightly higher number of iterations
than our third approach to deliver comparable accuracy in computing 
the communicability between pairs of nodes. Note that the cost per
step is comparable for the two methods. An advantage of the
{\tt mmq} approach is that it can provide lower and upper bounds on the 
quantities being computed, but only if
bounds on the singular values of $A$ are available. 
A disadvantage is that it requires working with vectors of length $2n$
instead of $n$.

Of course, the  Lanczos-based approach is not applicable to 
generalized matrix functions that do not arise as submatrices of 
standard matrix functions.
 
\subsection*{Block approaches}
In the following, we test the performance of the two block approaches described 
in section~\ref{sec:approach_bl} 
when trying to approximate the communicabilities among $k$ nodes of the \textsf{Twitter} network, which has $3656$ 
nodes and $188712$ edges~\cite{websnap}. 
All the computations were carried out with MATLAB Version 7.10.0.499 (R2010a) 64-bit 
for Linux, in double precision arithmetic, on an Intel Core i5 computer with 4 GB RAM.

In order to compute approximations to the communicabilities, we set 
$$
Z = W = [\mathbf{e}_{i_1}, {\bf e}_{i_2}, \dots, \mathbf{e}_{i_k}],
$$ 
where $i_1,i_2, \dots, i_k$ are chosen uniformly at random among the $n$ nodes of the network. 
To better analyze the behavior of the methods, we run both algorithms ten times and report in each table the averaged values obtained by changing the set of $k$ nodes after each run. 
As in the previous subsection, we test the performance of the two generalized matrix functions induced by $h(t) = \sinh(t)$ and 
$h(t) = \alpha t (1-(\alpha t)^2)^{-1}$, respectively.

The first approach is based on the computation of block Gauss and anti-Gauss quadrature rules. Since $\widetilde{Z} \neq W$, we need to use the nonsymmetric Lanczos algorithm and, in order to avoid breakdown during the computation, it is convenient to add a dense vector to each initial block (see~\cite{BDY} for more details). Each table reports the relative error and the relative distance between the two quadrature rules computed as:

$$
{\pcal E}_\ell = \frac{\|F_\ell - Z^T \gh(A) W \|_2}{\|Z^T \gh(A) W  \|_2}
\qquad \text{and} \qquad
{\pcal R}_\ell = 
\frac{\|{\pcal G}_\ell - {\pcal H}_{\ell+1}\|_{\max}}
{\|{\pcal G}_\ell + {\pcal H}_{\ell+1}\|_{\max}} \, ,
$$
respectively, where  
$$\|M\|_{\max} = \smash{\displaystyle\max_{\substack{1\le i\le m \\ 1\le j\le n}}} \{M_{ij} \} \text{, with } M\in\Cmn,$$ 
and $F_\ell$ is given by~\eqref{FL}.

In order to obtain good entrywise approximations of $Z^T \gh(A) W$, 
the domain $D$ of analyticity of $g(t) = (\sqrt{t})^{-1} h(\sqrt{t})$ 
has to enclose the smallest interval containing the spectrum 
of $A^TA$, and the boundary $\partial D$ has to be well separated 
from the extremes of the interval.  
However, when $h(t) = \sinh(t)$, the function $g(t)$ 
applied to our test problems
does not exhibit this nice property; indeed, in all three
cases $A$ is (numerically) singular and therefore
the singularity $t = 0$ belongs to the smallest interval containing the spectrum $A^TA$. 
As pointed out in section~\ref{sec:approach_bl}, we cannot therefore
expect $F_\ell = \left({\pcal G}_\ell + {\pcal H}_{\ell+1}\right)/2$
to provide a good approximation for $Z^T \gh(A) W$, 
since the condition $\left({\pcal I}-{\pcal H}_{\ell+1}\right)g \approx -\left({\pcal I}-{\pcal G}_\ell\right)g$ is not guaranteed.  

The results in Table~\ref{tab:block2} confirm this observation, 
since they 
clearly show that the relative error ${\pcal E}_{\ell}$ is not comparable with the relative distance ${\pcal R}_{\ell}$. 
As expected, a small value of ${\pcal R}_{\ell}$ does not ensure a 
satisfactory value of ${\pcal E}_{\ell}$. 
Therefore, the relative distance between the approximations provided by 
the Gauss and anti-Gauss rules cannot be used as a stopping criterion. 
Moreover, the results in Table~\ref{tab:block2} also show that performing more 
iterations does not improve the results; indeed, 
for all the values of the block size $k$ it holds ${\pcal E}_{5} \approx {\pcal E}_{10}$. 
It is also worth noting that the relative distance does not decrease 
as $\ell$ increases, but stabilizes 
far from the desired value and in one case it even increases. 
In view of this, the behavior of the algorithm is not satisfactory regardless 
of the nodes taken into account or the block size $k$.

\begin{table}[t]
\footnotesize
\caption{{\rm Execution time (in seconds), relative error and relative 
distance for the computation of the total communicabilities between $k$ nodes of the \textsf{Twitter} network with $\ell = 5$ and $\ell =10$ steps.}} 
\label{tab:block2}
\begin{center}
\begin{tabular}{c|ccc|ccc}
\hline
\multirow{2}{*}{$k$} & \multicolumn{3}{c|}{$\ell = 5$} & \multicolumn{3}{c}{$\ell = 10$} \\
 & Time & ${\pcal E}_5$ & ${\pcal R}_5$ & Time & ${\pcal E}_{10}$ & ${\pcal R}_{10}$ \\ 
\hline
5   & 2.14e-01 & 4.62e-04 & 5.07e-09 & 3.50e-01 & 4.62e-04 & 9.74e-10\\
10  & 2.70e-01 & 1.04e-02 & 2.21e-09 & 5.62e-01 & 1.04e-02 & 9.96e-10\\ 
20  & 4.21e-01 & 3.78e-02 & 5.39e-10 & 1.10e+00 & 3.78e-02 & 8.12e-09 \\
30  & 6.63e-01 & 2.24e-02 & 1.78e-11 & 2.12e+00 & 2.24e-02 & 3.14e-10 \\
50  & 1.24e+00 & 4.59e-02 & 6.83e-12 & 5.57e+00 & 4.59e-02 & 1.63e-11 \\
100 & 3.86e+00 & 5.65e-02 & 3.43e-11 & 2.72e+01 & 5.65e-02 & 1.60e-11 \\
\hline
\end{tabular}
\end{center}
\end{table}


Tables~\ref{tab:block3}--\ref{tab:block5} report the performance of the method when trying to approximate the communicabilities with respect to the function $h(t) = \alpha t (1 - (\alpha t)^2)^{-1}$, using different values of $\alpha \in (0, \sigma_1^{-1})$.
In this case, the function $g(t) = \alpha(1 - \alpha^2 t)^{-1}$ is analytic, provided that $\alpha < \sigma_1^{-1}$, as in our case. 

Note that as $\alpha$ approaches $\sigma_1^{-1}$, the relative error ${\pcal E}_\ell$ increases 
(cf. Tables~\ref{tab:block3}--\ref{tab:block5}). 
This happens because the distance between the boundary $\partial D$ of the domain $D$ of analyticity of $g(t)$ 
and the smallest interval containing the spectrum of $A^TA$ is decreasing (see section~\ref{ssec:block1}). 
In the first table, the values of ${\pcal E}_{2}$ and ${\pcal R}_2$ are comparable and we reach the desired accuracy with only two iterations. 
We remark that some other experiments, not shown here, pointed out that increasing the number of steps of the algorithm does not improve the relative error. 

\begin{table}[t]
\footnotesize
\caption{{\rm Number of block Lanczos steps, execution time (in seconds), 
relative error and relative distance for the computation of the 
Katz communicabilities between $k$ nodes of the \textsf{Twitter} network. 
Here, $\alpha = 1/(8\sigma_1)$.}}
\label{tab:block3}
\begin{center}
\begin{tabular}{c|ccc}
\hline
$k$ & Time & ${\pcal E}_2$ & ${\pcal R}_2$ \\ 
\hline
5   & 5.20e-02 & 4.77e-04 & 1.99e-04 \\
10  & 8.89e-02 & 7.90e-05 & 2.85e-04 \\ 
20  & 1.69e-01 & 4.06e-04 & 2.59e-04 \\
30  & 2.71e-01 & 5.85e-04 & 1.50e-04 \\
50  & 1.31e+00 & 2.60e-04 & 1.53e-04 \\
100 & 6.75e-01 & 5.40e-04 & 1.83e-04 \\
\hline
\end{tabular}
\end{center}
\end{table}

\begin{table}[h!]
\footnotesize
\caption{{\rm Number of block Lanczos steps, execution time (in seconds), 
relative error and relative distance for the computation of the Katz
communicabilities between $k$ nodes of the \textsf{Twitter} network. 
Here, $\alpha = 1/(2\sigma_1)$.}} 
\label{tab:block4}
\begin{center}
\begin{tabular}{c|cc c}
\hline
$k$ &  Time & ${\pcal E}_3$ & ${\pcal R}_3$ \\ 
\hline
5   & 5.63e-02 & 6.20e-04 & 7.54e-05 \\
10  & 9.47e-02 & 4.50e-04 & 2.66e-05 \\ 
20  & 1.79e-01 & 3.36e-03 & 1.15e-05 \\
30  & 2.61e-01 & 2.41e-03 & 6.56e-07 \\
50  & 4.66e-01 & 8.36e-03 & 1.14e-06 \\
100 & 1.35e+00 & 9.21e-03 & 1.04e-07 \\
\hline
\end{tabular}
\end{center}
\end{table}

\begin{table}[h!]
\footnotesize
\caption{{\rm Number of block Lanczos steps, execution time (in seconds), 
relative error and relative distance for the computation of the communicabilities 
between $k$ nodes of the \textsf{Twitter} network. Here, $\alpha = 0.85/\sigma_1$.}} 
\label{tab:block5}
\begin{center}
\begin{tabular}{c|c cc}
\hline
$k$  & Time & ${\pcal E}_3$ & ${\pcal R}_3$ \\ 
\hline
5   & 8.07e-02 & 1.89e-03 & 5.04e-05 \\
10  & 9.74e-02 & 4.26e-03 & 2.76e-04 \\ 
20  & 1.83e-01 & 2.19e-02 & 3.25e-04 \\
30  & 2.95e-01 & 1.45e-02 & 6.48e-05 \\
50  & 5.27e-01 & 3.46e-02 & 7.41e-05 \\
100 & 1.33e+00 & 1.90e-02 & 7.27e-06 \\
\hline
\end{tabular}
\end{center}
\end{table}

Table~\ref{tab:block4} shows that we obtain good approximations setting 
$\alpha = 0.5 \sigma_1^{-1}$, 
but the relative distance between the two quadrature rules decreases faster than the relative error. 
A similar behavior is shown in Table~\ref{tab:block5}, where $\alpha = 0.85\sigma_1^{-1}$. The relative error increases as $\alpha$ gets closer to $\sigma_1^{-1}$ and does not improve performing more steps.

\begin{table}[t]
\footnotesize
\caption{{\rm Execution time (in seconds), relative error, number of block GK steps and stopping criterion for the computation of the communicabilities between $k$ nodes of the \textsf{Twitter} network.}} 
\label{tab:block6}
\begin{center}
\begin{tabular}{c|cccc}
\hline
$k$ & ITER  & Time & ${\pcal E}_\ell$ & ${\pcal R}_\ell$ \\ 
\hline
5   & 6     & 2.51e+00 & 1.14e-08     & 1.98e-06 \\
10  & 6     & 1.65e+00 & 5.79e-09     & 1.58e-06 \\ 
20  & 5     & 1.77e+00 & 5.21e-09     & 1.20e-06 \\
30  & 5     & 2.19e+00 & 5.05e-09     & 1.25e-06 \\
50  & 5     & 3.34e+00 & 1.84e-09     & 7.04e-07 \\
100 & 4     & 5.05e+00 & 6.65e-09     & 4.38e-06 \\
\hline
\end{tabular}
\end{center}
\end{table}

We turn now to the approximation of the quantity~\eqref{gf_bilinear} using the 
second block approach, namely the block Golub--Kahan decomposition algorithm.
When using this approach, we perform as many steps as necessary to obtain   
$$
{\pcal R}_\ell = \frac{\|F_\ell - F_{\ell-1} \|_2}{\| F_{\ell-1}\|_2} < {\rm tol},
$$
with ${\rm tol} = 10^{-5}$. 
Table~\ref{tab:block6} displays the results obtained when we approximate the communicabilities among 
$k$ nodes with respect to the generalized matrix function induced 
by $h(t) = \sinh(t)$. 
Clearly, this approach requires a small number of steps to reach a high level of accuracy. 

\begin{table}[t!]
\footnotesize
\caption{{\rm Execution time (in seconds), relative error, number of block Golub--Kahan
 steps and stopping criterion for the computation of the communicabilities among $k$ 
nodes of the \textsf{Twitter} network. In this case, $\alpha = 1/(8\sigma_1)$.}} 
\label{tab:block7}
\begin{center}
\begin{tabular}{c|cccc}
\hline
$k$ & ITER & Time     & ${\pcal E}_\ell$ & ${\pcal R}_\ell$ \\ 
\hline
5   & 3    & 7.62e-01 & 3.22e-10    	 & 2.14e-06 \\
10  & 3    & 8.01e-01 & 3.93e-10    	 & 2.16e-06 \\ 
20  & 3    & 9.27e-01 & 1.77e-10    	 & 2.24e-06 \\
30  & 3    & 1.09e+00 & 9.64e-11    	 & 1.74e-06 \\
50  & 3    & 1.49e+00 & 4.40e-11    	 & 6.70e-07 \\
100 & 3    & 2.86e+00 & 1.16e-11    	 & 4.21e-07 \\
\hline
\end{tabular}
\end{center}
\end{table}

\begin{table}[h!]
\footnotesize
\caption{{\rm Execution time (in seconds), relative error, number of block GK steps and stopping criterion for the computation of the communicabilities between $k$ nodes of the \textsf{Twitter} network. In this case, $\alpha = 1/(2\sigma_1)$.}} 
\label{tab:block8}
\begin{center}
\begin{tabular}{c|cccc}
\hline
$k$ & ITER  & Time     & ${\pcal E}_\ell$ & ${\pcal R}_\ell$ \\ 
\hline
5   & 4     & 9.35e-01 & 3.47e-08    	  & 2.05e-06 \\
10  & 4     & 1.08e+00 & 8.85e-09    	  & 1.90e-06 \\ 
20  & 4     & 1.38e+00 & 1.56e-09    	  & 1.20e-06 \\
30  & 4     & 1.65e+00 & 3.70e-10    	  & 3.59e-07 \\
50  & 4     & 2.32e+00 & 1.49e-10    	  & 2.56e-07 \\
100 & 4     & 4.91e+00 & 2.20e-11    	  & 6.21e-08 \\
\hline
\end{tabular}
\end{center}
\end{table}

\begin{table}[h!]
\footnotesize
\caption{{\rm Execution time (in seconds), relative error, number of block GK steps and stopping criterion for the computation of the communicabilities between $k$ nodes of the \textsf{Twitter} network. In this case, $\alpha = 0.85/\sigma_1$.}} 
\label{tab:block9}
\begin{center}
\begin{tabular}{c|cccc}
\hline
$k$ & ITER & Time     & ${\pcal E}_\ell$ & ${\pcal R}_\ell$ \\ 
\hline
5   & 5    & 1.29e+00 & 3.02e-08	 & 2.08e-06 \\
10  & 5    & 1.36e+00 & 1.01e-08	 & 6.86e-07 \\ 
20  & 5    & 1.66e+00 & 1.42e-08	 & 2.38e-06 \\
30  & 5    & 2.16e+00 & 3.49e-09	 & 1.33e-06 \\
50  & 4    & 2.58e+00 & 7.81e-09	 & 4.33e-06 \\
100 & 4    & 4.84e+00 & 1.73e-09	 & 1.62e-06 \\
\hline
\end{tabular}
\end{center}
\end{table}

Tables~\ref{tab:block7}-\ref{tab:block9} show the results concerning the approximation 
of the communicabilities among $k$ nodes using the generalized matrix function 
induced by $h(t) = \alpha t(1-(\alpha t)^2)^{-1}$. 
As above, we consider three different values for the parameter $\alpha$. 
As in the scalar case, the method requires fewer iterations to reach a higher 
accuracy as the value of $\alpha$ moves away from $\sigma_1^{-1}$.

The two approaches behave again very differently. As before, the
results obtained with the second approach are very promising also in view of the 
fact that we did not make any assumptions on the 
regularity of the function.

\section{Conclusions}
\label{sec:conclusions}

In this paper we have proposed several algorithms for the computation of
certain quantities associated with generalized matrix functions. 
These techniques are based on Gaussian quadrature rules and different
variants of the Lanczos and Golub--Kahan algorithms.
In particular, we have investigated three distinct approaches for estimating
scalar quantities like ${\bf z^T}\gf (A) {\bf w}$,
and two block methods for computing
matrix-valued expressions like $Z^T \gf (A) W$. The performance 
of the various approaches has been tested in the context of computations
arising in network theory. 
While not all methods can be expected to always perform well in practice,
we have identified two approaches (one scalar-based, the other
block-based) that produce fast and accurate approximations for the
type of problems considered in this paper.


\section*{Acknowledgments}
Francesca Arrigo and Caterina Fenu would like to thank the Department
of Mathematics and Computer Science of Emory University for the hospitality 
offered in 2015, when part of this work was completed. 


\end{document}